\theoremstyle{plain}
\newtheorem{theorem}{Theorem}[section]
\newtheorem{lemma}[theorem]{Lemma}
\newtheorem{proposition}[theorem]{Proposition}
\newtheorem{conjecture}[theorem]{Conjecture}
\newtheorem{corollary}[theorem]{Corollary}
\theoremstyle{definition}
\def\Fq{{\mathbb F}_q}
\def\FF{{\mathbb F}}
\def\PP{{\mathbb P}}
\def\Fqm{{\mathbb F}_{q^m}}
\def\Fqmn{{\mathbb F}_{q^{mn}}}
\def\Fqtn{{\mathbb F}_{q^{2n}}}
\newcommand{\GL}{\operatorname{GL}}
\newcommand{\T}{\operatorname{T}}
\newcommand{\TGL}{\operatorname{TGL}}
\newcommand{\M}{\operatorname{M}}
\newcommand{\Cf}{{\sf C}_f}
\newcommand{\Cg}{{\sf C}_g}
\newcommand{\bcms}{{\rm BCMS}(m,n;q)}
\newcommand{\bcmi}{{\rm BCMI}(m,n;q)}
\newcommand{\bcmst}{{\rm BCMS}(2,n;q)}
\newcommand{\bcmit}{{\rm BCMI}(2,n;q)}
\newcommand{\pmn}{{\EuScript P}(mn;q)}
\newcommand{\imn}{{\EuScript I}(mn;q)}
\newcommand{\itn}{{\EuScript I}(2n;q)}
\newcommand{\ptn}{{\EuScript P}(2n;q)}
\newcommand{\bav}{{\EuScript B}^{\alpha}_{(v_1,v_2)}}
\newcommand{\baw}{{\EuScript B}^{\alpha}_{(w_1,w_2)}}
\newcommand{\bavm}{{\EuScript B}^{\alpha}_{(v_1,\dots, v_m)}}
\newcommand{\B}{{\EuScript B}}
\newcommand{\Sb}{{\EuScript S}_{\beta}}
\begin{document}
\title[Block Companion Singer Cycles and Coprime Polynomial Pairs]{Block Companion Singer Cycles, 
Primitive Recursive Vector Sequences, and Coprime Polynomial Pairs  over Finite Fields} 
\author{Sudhir R. Ghorpade}
\address{Department of Mathematics, 
Indian Institute of Technology Bombay,\newline \indent
Powai, Mumbai 400076, India.}
\email{srg@math.iitb.ac.in}

\author{Samrith Ram}
\address{Department of Mathematics,
Indian Institute of Technology Bombay,\newline \indent
Powai, Mumbai 400076, India.}
\email{samrithram@gmail.com}

\keywords{Singer cycle, Block companion matrix, Multiple Recursive Matrix Method, Linear Feedback Shift Register (LFSR), Splitting subspace, Toeplitz matrix}

\subjclass[2000]{11T35, 11T06, 20G40, 15B05}

\begin{abstract}
We discuss a conjecture concerning the enumeration of nonsingular matrices over a finite field that are block companion and whose order is the maximum possible in the corresponding general linear group. A special case is proved using some recent results on the probability that a pair of polynomials with coefficients in a finite field is coprime. Connection with an older problem of Niederreiter about the number of splitting subspaces of a given dimension are outlined and an asymptotic version of the conjectural formula is established. Some applications to the enumeration of nonsingular Toeplitz matrices of a given size over a finite field are also discussed.  
\end{abstract}
\date{\today}
\maketitle
\section{Introduction}

Let $\Fq$ denote the finite field with $q$ elements and let $m,n$ be positive integers.
For any positive integer $d$, we denote by $\M_d(\Fq)$ 
the set of all $d\times d$ matrices with entries in $\Fq$, and by $\GL_d(\Fq)$ 
the group of all 
nonsingular matrices in $\M_d(\Fq)$.   
By an $(m,n)$-\emph{block companion matrix} over $\Fq$ we mean $T\in \M_{mn}(\Fq)$ of the form
\begin{equation}
\label{typeT} 
T =
\begin {pmatrix}
\mathbf{0} & \mathbf{0} & \mathbf{0} & . & . & \mathbf{0} & \mathbf{0} & C_0\\
I_m & \mathbf{0} & \mathbf{0} & . & . & \mathbf{0} & \mathbf{0} & C_1\\
. & . & . & . & . & . & . & .\\
. & . & . & . & . & . & . & .\\
\mathbf{0} & \mathbf{0} & \mathbf{0} & . & . & I_m & \mathbf{0} & C_{n-2}\\
\mathbf{0} & \mathbf{0} & \mathbf{0} & . & . & \mathbf{0} & I_m & C_{n-1}
\end {pmatrix}, 
\end{equation}
where $C_0, C_1, \dots , C_{n-1}\in \M_m(\Fq)$ and $I_m$ denotes the $m\times m$ identity matrix over $\Fq$, while $\mathbf{0}$ indicates the zero matrix in $\M_m(\Fq)$. If such a matrix $T$ is a Singer cycle in $\GL_{mn}(\Fq)$, that is, if $T$ is nonsingular and the order of $T$ in the 
group $\GL_{mn}(\Fq)$ is the maximum possible (viz., $q^{mn}-1$), then we will 
call it 
a \emph{$(m,n)$-block companion Singer cycle} over $\Fq$. We are primarily interested in the following. 

\begin{conjecture}
\label{conj0}
The number of $(m,n)$-block companion Singer cycles over $\Fq$ is 
\begin{equation}
\label{NoSigmaLFSR}
\frac{\phi(q^{mn}-1)}{mn} \, q^{m(m-1)(n-1)} \displaystyle \prod_{i=1}^{m-1}(q^m-q^i),
\end{equation}
where $\phi$ is the Euler totient function.
\end{conjecture}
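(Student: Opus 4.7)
I would reduce Conjecture~\ref{conj0} to Niederreiter's conjectural formula for the number of $m$-dimensional splitting subspaces. First, a matrix $T \in \GL_{mn}(\Fq)$ has order $q^{mn}-1$ if and only if its minimal polynomial (which then coincides with its characteristic polynomial) is a primitive polynomial of degree $mn$, and there are $\phi(q^{mn}-1)/(mn)$ such polynomials. Consequently the Singer cycles in $\GL_{mn}(\Fq)$ partition into similarity classes $\mathcal{C}_p$, one for each primitive $p(x)$ of degree $mn$, and it suffices to count, for a fixed $p$, the $(m,n)$-block companion matrices lying in $\mathcal{C}_p$.

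Fix such a $p$ and identify $\Fq^{mn}$ with $\Fqmn \cong \Fq[x]/(p(x))$, letting $\sigma$ be the image of $x$, a primitive element. Inspection of \eqref{typeT} shows that $T \in \mathcal{C}_p$ has $(m,n)$-block companion form precisely when the standard coordinate subspace $W_0 = \langle e_1, \dots, e_m \rangle$ satisfies $\Fq^{mn} = W_0 \oplus TW_0 \oplus \cdots \oplus T^{n-1}W_0$. More intrinsically, the block companion matrices in $\mathcal{C}_p$ are parametrized by pairs $(W, (w_1, \dots, w_m))$ in which $W$ is an $m$-dimensional $\Fq$-subspace of $\Fqmn$ satisfying the \emph{splitting decomposition}
\begin{equation*}
\Fqmn = W \oplus \sigma W \oplus \cdots \oplus \sigma^{n-1}W,
\end{equation*}
and $(w_1, \dots, w_m)$ is an ordered basis of $W$: such data yields the $\Fq$-linear isomorphism $\psi \colon e_{im+j} \mapsto \sigma^i w_j$, and $T$ is then the matrix of multiplication by $\sigma$ on $\Fqmn$ transported to $\Fq^{mn}$ via $\psi$.

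Two such pairs produce the same $T$ exactly when the associated isomorphisms $\psi$ differ by a $\sigma$-equivariant automorphism of $\Fqmn$, i.e.\ by multiplication by an element of $\Fqmn^*$, giving a fiber of size $q^{mn}-1$. Writing $N(m,n;q)$ for the number of $m$-dimensional splitting subspaces of $\Fqmn$, the number of block companion matrices in $\mathcal{C}_p$ is therefore $N(m,n;q)\,|\GL_m(\Fq)|/(q^{mn}-1)$. Multiplying by $\phi(q^{mn}-1)/(mn)$ and using $|\GL_m(\Fq)| = (q^m-1)\prod_{i=1}^{m-1}(q^m-q^i)$, Conjecture~\ref{conj0} is seen to be equivalent to the identity
\begin{equation*}
N(m,n;q) = \frac{(q^{mn}-1)\, q^{m(m-1)(n-1)}}{q^m-1},
\end{equation*}
which is Niederreiter's splitting subspace conjecture.

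The principal obstacle is thus Niederreiter's formula, which is open in general. For $m=2$ one can parametrize a $2$-dimensional splitting subspace of $\Fqtn$ by a suitably normalized basis and translate the splitting decomposition into the requirement that two polynomials of degree less than $n$ be coprime over $\Fq$; the recently established closed-form count of coprime polynomial pairs then yields Niederreiter's formula for $m=2$, and hence the conjecture in this case. For arbitrary $m$, the analogous parametrization would demand a higher-rank coprimeness analysis for $m$-tuples of polynomials that appears beyond current techniques; instead, an approximate version with explicit error terms in $q$ is accessible, giving the asymptotic form of \eqref{NoSigmaLFSR} as $q \to \infty$.
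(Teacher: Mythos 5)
Your proposal follows essentially the same route as the paper: reduce via the characteristic map to counting the fiber over each primitive (indeed, each irreducible) polynomial of degree $mn$, identify that fiber with the set of ordered bases of $\Fqmn$ of the form $\{v_1,\dots,v_m,\alpha v_1,\dots,\alpha v_m,\dots,\alpha^{n-1}v_m\}$ modulo the centralizer $\Fqmn^*$ of size $q^{mn}-1$, recognize the result as equivalent to Niederreiter's splitting subspace count, settle $m=2$ by the coprime-polynomial-pair formula, and give the asymptotic version for general $m$. As in the paper, this does not prove Conjecture~\ref{conj0} in full generality --- it establishes only the equivalence with the Splitting Subspace Conjecture, the case $m=2$, and the asymptotics --- and one small imprecision worth fixing is your claim that $T\in\mathcal{C}_p$ is block companion \emph{precisely} when $W_0\oplus TW_0\oplus\cdots\oplus T^{n-1}W_0=\Fq^{mn}$ (the splitting condition alone does not force $Te_j=e_{m+j}$), though the intrinsic parametrization you actually use immediately afterwards is the correct one.
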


This conjecture arose in the study by Zeng, Han and He \cite{Zeng} 
of word-oriented linear feedback shift registers, called $\sigma$-LFSRs and is 
equivalent to showing that the number of primitive $\sigma$-LFSRs of order $n$ over $\Fqm$ is given by \eqref{NoSigmaLFSR} above. It may be noted 
that a special case 
of $\sigma$-LFSRs appears earlier in the work of Tsaban and Vishne \cite{TV}. Moreover, the $\sigma$-LFSRs turn out to be essentially the same as recursive vector sequences studied by Niederreiter \cite{N1,N2} in the context of his work on pseudorandom number generation and his multiple-recursive matrix method. As such the question about the enumeration of block companion Singer cycles over $\Fq$ is intimately related to the open problem about the determination of the total number of $\sigma$-splitting subspaces over $\Fq$ of a given dimension. (See Section \ref{sec:splitsubsp} for details.) Nonetheless, the explicit conjectural formula \eqref{NoSigmaLFSR} should be attributed to Zeng, Han and He \cite{Zeng}, at least in the binary case, whereas the above formulation in the $q$-ary case is as in \cite{GSM}. Although there is significant numerical evidence in its favour, Conjecture \ref{conj0} is open, in general, except in the trivial case $m=1$ (and any $n$) and the not-so-trivial special case $n=1$ (and any $m$), where it is proved in \cite{GSM}. A plausible approach to proving Conjecture \ref{conj0} in the general case was proposed in \cite{GSM} and a more refined, but perhaps more amenable, conjecture called the Fiber Conjecture was formulated there. 

In this paper, we prove that the Fiber Conjecture and, as an immediate consequence,  Conjecture \ref{conj0}, holds in the affirmative in the case $m=2$ (and any $n$). In fact, we consider a more general version of the Fiber Conjecture, called Irreducible Fiber Conjecture, and show that it is
valid when $m=2$. One of the key tools used is the recent work on the question of determining the probability of two randomly chosen polynomials of a given positive degree with coefficients in $\Fq$ being relatively prime. This question can be traced back to an exercise in Knuth's book 
\cite[\S 4.6.1, Ex. 5]{K} (see also \cite[Rem. 4.2]{GGR}). More recently, it arose in the study by Corteel, Savage, Wilf, and Zeilberger \cite{CSWZ} of Euler's pentagonal sieve in the theory of partitions and has led to a number of developments; we refer to the subsequent work of Reifegerate \cite{Reif},  Benjamin and Bennett \cite{BB},  Gao and Panario \cite{GP}, Hao and Mullen \cite{HM}, and of Garc\'{i}a-Armas, Ghorpade and Ram \cite{GGR} for more on this topic. 
While the general case of Conjecture \ref{conj0} as well as
Niederreiter's splitting subspace problem still remains open, we provide a quantitative version of the latter together with a refinement, which imply the former. (See Section \ref{sec:splitsubsp} for details). Moreover, in Section \ref{sec:asymp}, we give an asymptotic formula for the cardinality of an  irreducible fiber, which appears to strengthen the validity of the conjectural formula \eqref{NoSigmaLFSR}. Finally, 
as an application of some of the methods used in our proof, we deduce a 
formula for the number of nonsingular Toeplitz matrices (or equivalently, the number of nonsingular Hankel matrices) over $\Fq$, which has also been of some recent interest. 
%

\section{The Characteristic Map} 
\label{charmap}
Denote, as usual, by $\Fq[X]$ the ring of polynomials in one variable $X$ with coefficients in $\Fq$. 
Recall that a polynomial 
in $\Fq[X]$ of degree $d \ge 1$ is said to be \emph{primitive} if it is the minimal polynomial over $\Fq$ of a 
generator of the cyclic group $\mathbb F_{q^d}^*$ of nonzero elements of the finite field $\mathbb F_{q^d}$. 
Fix, throughout this paper, positive integers $m$ and $n$.   
Let 
$$
\pmn := \left\{p(X)\in \Fq[X] : p(X) \mbox{ is primitive of degree } mn\right\}
$$
and let 
$$
\imn := \left\{p(X)\in \Fq[X] : p(X) \mbox{ is monic and irreducible of degree } mn\right\}.
$$
Evidently, $\pmn \subseteq \imn$, but the reverse inclusion is not true, in general. 
The cardinalities of these sets are well known (cf. \cite[\S 2]{GSM}, \cite[p. 93]{LN}); 
namely,
\begin{equation}
\label{cardpmnimn}
\left|\pmn \right| = \frac{\phi(q^{mn}-1)}{mn} \quad \text{and} \quad 
\left|\imn \right| = \frac{1}{mn} \sum_{d\mid mn} \mu\left(\frac{mn}{d}\right)q^d,
\end{equation}
where $\mu$ denotes the M\"obius function. 

The map which associates to an $mn\times mn$ matrix its characteristic polynomial, viz., 
$$
\Phi :\M_{mn}(\Fq) \to \Fq[X] \quad \mbox{ defined by } \quad \Phi(T): = \det\left(XI_{mn} - T \right)
$$
will often be referred to as the \emph{characteristic map}. We denote by $\bcms$ the set of $(m,n)$-block companion Singer cycles over $\Fq$, and by 
$\bcmi$ the set of $(m,n)$-block companion matrices over $\Fq$ having an irreducible characteristic polynomial. Evidently, $\bcms \subseteq \bcmi$ 
and $\Phi$ maps $\bcmi$ into $\imn$. A little less obvious, yet elementary, fact is that a nonsingular matrix is a Singer cycle if and only if its  characteristic polynomial is primitive (see, e.g., \cite[Prop. 3.1]{GSM}); in particular, $\Phi$ maps $\bcms$ into $\pmn$. As a result, restrictions of $\Phi$ yield the following maps:
$$
\Psi: \bcms \to \pmn \quad \mbox{and} \quad \Theta: \bcmi\to \imn.
$$
The following result is proved in \cite[Theorem 6.1]{GSM}.

\begin{proposition}
\label{surjpsi}
$\Psi$ is surjective.
\end{proposition}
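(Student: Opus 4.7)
The plan is to exhibit, for each primitive polynomial $p(X) \in \pmn$, an $(m,n)$-block companion matrix $T$ whose characteristic polynomial equals $p(X)$. By the fact recalled just before the statement---that a nonsingular matrix is a Singer cycle if and only if its characteristic polynomial is primitive---any such $T$ is automatically in $\bcms$. So the task reduces to realizing every primitive polynomial of degree $mn$ as the characteristic polynomial of a matrix in the shape \eqref{typeT}.

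The key device is the tower of fields $\Fq \subseteq \Fqm \subseteq \Fqmn$. Let $\alpha$ be a root of $p$ in an algebraic closure of $\Fq$; since $p$ is irreducible of degree $mn$, one has $\Fq(\alpha) = \Fqmn$, and in particular $\alpha \in \Fqmn$. Because $\Fqm(\alpha) \supseteq \Fq(\alpha) = \Fqmn$, the element $\alpha$ has degree exactly $n$ over $\Fqm$; let
$$g(X) = X^n - c_{n-1}X^{n-1} - \cdots - c_1 X - c_0 \in \Fqm[X]$$
be its minimal polynomial. Then $\{1, \alpha, \ldots, \alpha^{n-1}\}$ is an $\Fqm$-basis of $\Fqmn$. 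Fix any $\Fq$-basis $(e_1, \ldots, e_m)$ of $\Fqm$ and form the ordered $\Fq$-basis
$$
\B = (e_1, \ldots, e_m,\; \alpha e_1, \ldots, \alpha e_m,\; \ldots,\; \alpha^{n-1} e_1, \ldots, \alpha^{n-1} e_m)
$$
of $\Fqmn$.

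Now let $T$ be the matrix, in the basis $\B$, of the $\Fq$-linear map $\mu_\alpha: \Fqmn \to \Fqmn$ given by multiplication by $\alpha$. For $0 \le i \le n-2$, the image $\mu_\alpha(\alpha^i e_j) = \alpha^{i+1} e_j$ is again a basis vector, which yields the identity blocks $I_m$ along the block subdiagonal demanded by \eqref{typeT}. For the last block of columns, the computation $\mu_\alpha(\alpha^{n-1} e_j) = \alpha^n e_j = \sum_{k=0}^{n-1} c_k \alpha^k e_j$ together with the expansion of each $c_k e_j \in \Fqm$ in the basis $(e_1, \ldots, e_m)$ produces, for the $k$-th block entry, the matrix $C_k \in \M_m(\Fq)$ representing multiplication by $c_k$ on $\Fqm$. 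Hence $T$ takes precisely the block companion form \eqref{typeT}, and its characteristic polynomial coincides with that of $\mu_\alpha$, which is $p(X)$ because $p$ is the minimal polynomial of $\alpha$ over $\Fq$ and has full degree $mn = \dim_{\Fq} \Fqmn$.

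I do not anticipate a serious obstacle: everything hinges on choosing the basis $\B$ adapted to the tower $\Fq \subseteq \Fqm \subseteq \Fqmn$, after which the block companion shape and the identification of the characteristic polynomial are forced. The only subtle point to double-check is that $\alpha$ really has degree $n$ (and not a proper divisor) over $\Fqm$, which is a direct consequence of $\Fq(\alpha) = \Fqmn$ and the multiplicativity of degrees in the tower.
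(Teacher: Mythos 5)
Your proof is correct, and it produces exactly the same matrix $T$ as the paper's proof of the (more general) Proposition~\ref{surjtheta}: the blocks $C_k$ are the matrices of multiplication by the coefficients of the minimal polynomial $g$ of $\alpha$ over $\Fqm$, taken with respect to a fixed $\Fq$-basis of $\Fqm$. Where you differ is in the verification that $\Phi(T)=p$. The paper argues formally: it applies Cayley--Hamilton to the companion matrix $\Cg$ over $\Fqm$ to get $f(\Cg)=0$, transports this identity through the ring homomorphism $\beta\mapsto A_\beta$ to conclude $f(T)=0$, and then invokes irreducibility of $f$ of degree $mn$ to force $f$ to be the characteristic polynomial. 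You instead exhibit $T$ directly as the matrix of the multiplication map $\mu_\alpha$ with respect to the ordered basis $(e_1,\dots,e_m,\alpha e_1,\dots,\alpha e_m,\dots,\alpha^{n-1}e_1,\dots,\alpha^{n-1}e_m)$ adapted to the tower $\Fq\subseteq\Fqm\subseteq\Fqmn$, after which the block companion shape and the characteristic polynomial are both immediate. Your route is the more geometric one, and it is in fact the viewpoint the paper itself adopts later: your basis is precisely ${\EuScript B}^{\alpha}_{(e_1,\dots,e_m)}$ in the notation of Sections 4--5, and your argument amounts to observing that $\Fqm$ is an $\alpha$-splitting subspace, which is how the paper re-derives surjectivity from part (i) of Proposition~\ref{elemsplitprop}. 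What the paper's formal argument buys is brevity (no need to check that the ordered set is a basis, which your argument implicitly needs via the tower of degrees); what yours buys is a transparent picture that feeds directly into the fiber-counting machinery of Lemmas~\ref{lalpha} and~\ref{lalpham}. Both are complete proofs.
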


Here is a small generalization of Proposition \ref{surjpsi} for which a proof is included. This can also be viewed as an alternative, and slightly shorter, proof of Proposition~\ref{surjpsi} compared to the one given in \cite{GSM}.

\begin{proposition}
\label{surjtheta}
$\Theta$ is surjective and hence so is $\Psi$.
\end{proposition}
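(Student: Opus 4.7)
The plan is to construct, for every $f \in \imn$, an $(m,n)$-block companion matrix $T$ with $\Phi(T) = f$, by realizing $T$ as the matrix of multiplication by a root of $f$ on $\Fqmn$ viewed as an $\Fq$-vector space in a carefully chosen basis.

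First I would fix $f \in \imn$ and pick a root $\alpha \in \Fqmn$ of $f$. Since $f$ is irreducible of degree $mn$, the element $\alpha$ has degree $mn$ over $\Fq$, so $\Fq(\alpha)=\Fqmn$; combined with $[\Fqmn:\Fqm]=n$, this forces $\alpha$ to have degree exactly $n$ over $\Fqm$. Let $g(X)\in\Fqm[X]$ be its minimal polynomial over $\Fqm$, written
\[
g(X) = X^n - c_{n-1}X^{n-1} - \cdots - c_1 X - c_0, \qquad c_i\in\Fqm.
\]
Fix an $\Fq$-basis $e_1,\dots,e_m$ of $\Fqm$, and for each $i$ let $C_i \in \M_m(\Fq)$ be the matrix representing multiplication by $c_i$ on $\Fqm$ in this basis. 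Assemble the $C_i$ into a block companion matrix $T$ of the form \eqref{typeT}.

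Next I would verify that $T$ is exactly the matrix, with respect to the ordered $\Fq$-basis
\[
e_1,\dots,e_m,\; \alpha e_1,\dots,\alpha e_m,\;\dots,\;\alpha^{n-1}e_1,\dots,\alpha^{n-1}e_m
\]
of $\Fqmn$, of the $\Fq$-linear map ``multiplication by $\alpha$'' on $\Fqmn$. For the first $n-1$ blocks of basis vectors this is transparent: multiplication by $\alpha$ sends $\alpha^j e_k$ to $\alpha^{j+1}e_k$, which accounts for the shifted identity blocks $I_m$. For the last block, the relation $\alpha^n = c_0 + c_1\alpha + \cdots + c_{n-1}\alpha^{n-1}$ coming from $g(\alpha)=0$, together with the definition of the $C_i$ as multiplication-by-$c_i$ matrices, yields precisely the last column of blocks $C_0,C_1,\dots,C_{n-1}$. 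Once this identification is made, the characteristic polynomial of $T$ over $\Fq$ equals that of multiplication by $\alpha$ on $\Fqmn$; but the latter is a monic polynomial in $\Fq[X]$ of degree $mn$ annihilating $\alpha$, hence must be the minimal polynomial $f$ itself. Thus $\Phi(T)=f$ and $T\in\bcmi$, proving surjectivity of $\Theta$.

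For the ``hence so is $\Psi$'' part, I would simply note that if $f\in\pmn$ then $f\in\imn$ too, so the construction above yields $T\in\bcmi$ with $\Phi(T)=f$; since $f$ is primitive, the elementary fact recalled in the excerpt (a nonsingular matrix is a Singer cycle iff its characteristic polynomial is primitive) implies that $T$ is a Singer cycle, i.e.\ $T\in\bcms$, and $\Psi(T)=f$. No real obstacle is expected here; the only point that requires a little bookkeeping is the block-wise verification that the matrix of multiplication by $\alpha$ in the chosen ordered basis is precisely of the form \eqref{typeT} with blocks $C_i$ as defined, which is routine once the basis is ordered correctly.
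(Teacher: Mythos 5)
Your proof is correct, and while it constructs exactly the same matrix $T$ as the paper (the block companion matrix whose blocks $C_i$ represent multiplication by the coefficients of the minimal polynomial $g$ of $\alpha$ over $\Fqm$), your verification that $\Phi(T)=f$ goes by a genuinely different route. The paper argues algebraically: by Cayley--Hamilton $g(\Cg)=0$, hence $f(\Cg)=0$ since $g\mid f$ in $\Fqm[X]$, and the resulting polynomial identities in the coefficients $\beta_i$ transfer to the commuting matrices $C_i=A_{\beta_i}$ via the algebra homomorphism $\beta\mapsto A_\beta$, giving $f(T)=0$ and hence $\Phi(T)=f$. You instead exhibit $T$ directly as the matrix of $L_\alpha$ with respect to the tower basis $\{\alpha^j e_k\}$ of $\Fqmn/\Fq$ (you should say explicitly that this is a basis, which follows from $[\Fqmn:\Fqm]=n$ together with $e_1,\dots,e_m$ being an $\Fq$-basis of $\Fqm$, but this is standard). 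Your version is more conceptual and, in effect, anticipates the machinery the paper develops later: it amounts to observing that $\Fqm$ itself is an $m$-dimensional $\alpha$-splitting subspace of $\Fqmn$, which is exactly how the paper later remarks (after Proposition \ref{elemsplitprop}) that surjectivity of $\Theta$ can be rederived, and it meshes directly with the fiber description of Lemma \ref{lalpham}. What the paper's argument buys in exchange is that it avoids the block-by-block bookkeeping and the basis verification entirely, reducing everything to a formal transfer of identities.
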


\begin{proof}
Let $f\in \imn$. If $\alpha\in \Fqmn$ is a root of $f$, then 
$\Fqmn = \Fq(\alpha)=\Fqm(\alpha)$. 
In particular, $[\Fqm(\alpha):\Fqm] =n$ 
and moreover, if $g\in \Fqm[X]$ denotes the minimal polynomial of $\alpha$ over $\Fqm$, then $\deg g = n$ and $g$ divides $f$ in $\Fqm[X]$. Write 
$g = X^n - \beta_{n-1}X^{n-1} - \dots - \beta_1 X - \beta_0$. Now for any $\beta \in \Fqm$, let $L_{\beta}: \Fqm \to \Fqm$ denote the $\Fq$-linear transformation defined by $L_{\beta}(x) := \beta x$, and let $A_{\beta}\in M_m(\Fq)$ be the matrix of $L_{\beta}$ with respect to a fixed $\Fq$-basis of $\Fqm$. It is clear that for any  $\beta, \gamma\in \Fqm \text{ and } \lambda\in \Fq$, we have
\begin{equation}
\label{Abetagamma}
A_{\beta+\gamma} = A_{\beta}+A_{\gamma}, \quad A_{\beta\gamma} = A_{\beta}A_{\gamma} \quad \text{and} \quad A_{\lambda\beta} = \lambda A_{\beta}.
\end{equation}
Consider the companion matrix $\Cg\in M_n(\Fqm)$ 
of $g$ and the corresponding $(m,n)$-block companion matrix $T \in M_{mn}(\Fq)$, namely,
$$ 
\Cg =
\begin{pmatrix}
0 & 0 &  \dots & 0 & \beta_0\\
1 & 0 & \dots  & 0 & \beta_1\\ 
\vdots &  & \ddots  &   & \vdots \\ 
0 & 0 & \dots  & 1 & \beta_{n-1}
\end{pmatrix}
\quad \text{and} \quad 
T =
\begin{pmatrix}
\mathbf{0} & \mathbf{0}  & \dots  & \mathbf{0} & C_0\\
I_m        & \mathbf{0}  &  \dots & \mathbf{0} & C_1\\
\vdots     &             & \ddots &  .         & \vdots \\
\mathbf{0} & \mathbf{0}  & \dots & I_m         & C_{n-1}
\end{pmatrix}, 
$$
where we have let $C_i= A_{\beta_i}$ for $i=0,1,\dots , n-1$. By the Cayley-Hamilton Theorem, $g(\Cg)=0$ and hence $f(\Cg)=0$. The last equation corresponds to $n^2$ polynomial expressions in $\beta_0, \beta_1, \dots , \beta_{n-1}$ with coefficients in $\Fq$ being equal to zero. In view of \eqref{Abetagamma}, these equations continue to hold if $\beta_i$'s are replaced by $C_i$'s. Consequently, $f(T)=0$. Since $f\in \Fq[X]$ is monic and irreducible of degree $mn$, it follows that $f$ is the characteristic polynomial of $T$, i.e., $f = \Theta (T)$.  
\end{proof}

As an immediate consequence of Proposition \ref{surjtheta}, we obtain natural decompositions of $\bcms$ and 
$\bcmi$ as 
disjoint unions of the fibers of the maps $\Psi$ and $\Theta$, respectively. 
This decomposition of $\bcms$ and 
Proposition \ref{surjpsi} suggested the following refined version proposed in \cite{GSM} of Conjecture \ref{conj0}. 
\begin{conjecture}
\label{FibConj1}
$\left|\Psi^{-1}\left(f\right)\right| = q^{m(m-1)(n-1)} \displaystyle \prod_{i=1}^{m-1}(q^m-q^i)$
for any $f\in \pmn$. 
\end{conjecture}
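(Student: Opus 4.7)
The plan proceeds in three stages: a reduction to counting ordered splitting tuples, a polynomial reformulation via base change to $\Fqm$, and an enumeration by M\"obius inversion. Fix $f \in \pmn$ together with a root $\alpha \in \Fqmn$, which identifies $\Fqmn$ with the $\Fq[X]$-module $\Fq[X]/(f)$ in such a way that multiplication by $\alpha$ corresponds to the companion matrix $\Cf$. Every $T \in \Psi^{-1}(f)$ is $\GL_{mn}(\Fq)$-conjugate to $\Cf$, uniquely up to the centralizer $\Fq[\Cf]^{*} \cong \Fqmn^{*}$; reading off the first $m$ columns of any conjugating matrix produces an $m$-tuple $(v_1, \ldots, v_m) \in \Fqmn^{m}$, and the block-companion shape of $T$ is equivalent to the assertion that the $mn$ vectors $\{\alpha^{j} v_{k} : 0 \leq j \leq n-1,\ 1 \leq k \leq m\}$ form an $\Fq$-basis of $\Fqmn$. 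Writing $\mathcal{T}_\alpha$ for the set of such ordered tuples, one obtains $|\mathcal{T}_\alpha| = (q^{mn}-1)\,|\Psi^{-1}(f)|$, and Conjecture \ref{FibConj1} becomes the identity
\[
|\mathcal{T}_\alpha| = (q^{mn}-1)\, q^{m(m-1)(n-1)} \prod_{i=1}^{m-1}(q^{m} - q^{i}).
\]
Because this reduction uses only the irreducibility of $f$, a successful execution of the plan would simultaneously establish the Irreducible Fiber Conjecture; and after further quotienting by the simply transitive $\GL_{m}(\Fq)$-action on ordered bases of a splitting $m$-subspace, it is equivalent to Niederreiter's splitting subspace formula discussed in Section \ref{sec:splitsubsp}.

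Next, I would exploit the factorization $f = g_{0} g_{1} \cdots g_{m-1}$ over $\Fqm$, where $g_{i}$ is the minimal polynomial of $\alpha_{i} := \alpha^{q^{i}}$ over $\Fqm$ (so $g_{0}$ is the $g$ appearing in the proof of Proposition \ref{surjtheta}). Scalar extension to $\Fqm$ yields the Chinese Remainder isomorphism
\[
\Fqmn \otimes_{\Fq} \Fqm \;\cong\; \prod_{i=0}^{m-1} \Fqm[X]/(g_{i}),
\]
under which $v \in \Fqmn$ maps to its tuple of Galois twists $(v^{q^{i}})_{i=0}^{m-1}$. Writing each $v_{k} = r_{k}(\alpha)$ for some $r_{k} \in \Fq[X]$ of degree less than $mn$, the $\Fq$-linear independence of $\{\alpha^{j} v_{k}\}$ becomes, after base change, the invertibility of an $mn \times mn$ matrix over $\Fqm$ whose $i$-th block of rows records the products $(\alpha_{i})^{j} r_{k}(\alpha_{i})$. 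Unpacking this determinant should yield an explicit coprimality-type criterion on the residues $r_{k} \bmod g_{0}$ inside $\Fqm[X]/(g_{0}) \cong \Fqmn$, after which the valid tuples would be enumerated by M\"obius inversion over the lattice of possible common factors.

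For $m = 2$ the criterion collapses to ordinary coprimality of a pair of polynomials of degree less than $n$ over $\Fqm$, and the enumeration follows from Knuth's exercise \cite{K} and its refinements in \cite{CSWZ, Reif, BB, GP, HM, GGR}, reproducing the conjectural product formula. The principal obstacle for $m \geq 3$ lies at the end of the middle stage: the splitting condition is not merely the absence of a single common factor of $r_{1}, \ldots, r_{m}$, but a higher-order rank condition involving $m$ simultaneous Galois twists, and translating it into a combinatorially tractable divisibility statement seems to demand a genuine $m$-polynomial analogue of the Knuth--CSWZ coprimality identity which, together with the attendant M\"obius cancellations, is currently beyond reach. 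Since the first-stage reduction identifies this difficulty with the open splitting subspace problem of Niederreiter, any proof of Conjecture \ref{FibConj1} in the full generality stated would simultaneously resolve that longstanding question.
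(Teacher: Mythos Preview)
Your proposal is not a proof of Conjecture~\ref{FibConj1} in the generality stated, and you say so explicitly: you identify the obstacle for $m\ge 3$ and note that a full proof would resolve Niederreiter's splitting subspace problem. This matches the paper exactly, which leaves the conjecture open for $m\ge 3$ and proves it only when $m=2$ (Theorem~\ref{mtwoconj}). Your first-stage reduction---conjugacy with $\Cf$, centralizer of size $q^{mn}-1$, passage to ordered splitting tuples, and the further quotient by $\GL_m(\Fq)$ to reach Niederreiter's splitting subspace count---is precisely the content of Lemmas~\ref{lalpham}, \ref{centralizerm} and Corollary~\ref{SandN}, and the equivalence with Niederreiter's problem is drawn there in the same way.

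Where you diverge is the middle stage for $m=2$. The paper does \emph{not} base-change to $\Fqm$ or invoke the Chinese Remainder decomposition; instead it fixes $v_1\ne 0$, sets $\beta=v_2/v_1$, and shows directly (Lemma~\ref{nobases}) that $\Sb$ fails to be an $\Fq$-basis precisely when $\beta$ lies in the set $\Sigma_\alpha$ of ratios $f(\alpha)/g(\alpha)$ with $f,g\in\Fq[X]$ nonzero of degree $<n$. A bijection of $\Sigma_\alpha$ with the coprime pairs of Corollary~\ref{corbb} then gives the count. Thus the paper's coprimality statement lives over $\Fq$, not over $\Fqm$ as you propose. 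Your CRT route is a genuinely different reformulation and may well be workable, but you have not actually carried out the key step: the assertion that ``the criterion collapses to ordinary coprimality of a pair of polynomials of degree less than $n$ over $\Fqm$'' is stated without justification, and the passage from the block determinant after base change to a clean coprimality condition on the residues $r_k\bmod g_0$ is nontrivial and is where the real work would lie. As it stands, your $m=2$ argument is a plausible outline rather than a proof, whereas the paper's direct $\Sigma_\alpha$ argument is complete.
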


In light of Proposition \ref{surjtheta}, we propose the following 
more general version of Conjecture \ref{FibConj1}. 

\begin{conjecture}
\label{FibConj2}
$\left|\Theta^{-1}\left(f\right)\right| = q^{m(m-1)(n-1)} \displaystyle \prod_{i=1}^{m-1}(q^m-q^i)$
for any $f\in \imn$.
\end{conjecture}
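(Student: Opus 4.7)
My plan is to reduce Conjecture~\ref{FibConj2} to a subspace-counting problem over $\Fqmn$ via a matrix-polynomial identification, and then to attempt that count by generalising the reduction that works cleanly in the $m=2$ case. For $T \in \Theta^{-1}(f)$ with blocks $C_0,\ldots,C_{n-1}$, introduce
$$C^*(X) := X^n I_m - \sum_{j=0}^{n-1} X^j C_j \in \M_m(\Fq[X]).$$
A standard block-matrix manipulation (iterated Schur complements on $XI_{mn}-T$) gives $\det(XI_{mn}-T)=\det C^*(X)$, so $\Theta^{-1}(f)$ is in bijection with the set $\mathcal{M}_f$ of matrix polynomials of this form (diagonal monic of degree $n$, off-diagonal of degree $<n$) having $\det = f$. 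Irreducibility of $f$ forces the Smith normal form of any $C^*(X)\in\mathcal{M}_f$ to be $\diag(1,\ldots,1,f)$, so the $\Fq[X]$-cokernel $\Fq[X]^m/C^*(X)\Fq[X]^m$ is cyclic with annihilator $(f)$, hence isomorphic to $K:=\Fq[X]/(f)\cong\Fqmn$. A choice of isomorphism $\phi$ identifies $C^*(X)$ with the tuple $(u_1,\ldots,u_m)\in K^m$ obtained as the images of the standard generators, and the shape of $C^*(X)$ translates precisely into the requirement that $\{\alpha^j u_i:0\le j<n,\,1\le i\le m\}$ form an $\Fq$-basis of $K$, with $\alpha$ the image of $X$ in $K$. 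Two tuples yield the same $C^*(X)$ iff they differ by simultaneous scaling by some $c\in\Fqmn^*\cong\mathrm{Aut}_{\Fq[X]}(K)$, so
$$|\Theta^{-1}(f)| \;=\; \frac{N(f)\,\prod_{i=0}^{m-1}(q^m-q^i)}{q^{mn}-1},$$
where $N(f)$ counts the $\Fq$-subspaces $U\subset K$ of dimension $m$ satisfying $K=U\oplus\alpha U\oplus\cdots\oplus\alpha^{n-1}U$. Conjecture~\ref{FibConj2} therefore becomes equivalent to Niederreiter's splitting-subspace formula $N(f)=(q^{mn}-1)\,q^{m(m-1)(n-1)}/(q^m-1)$.

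For a direct count of $|\mathcal{M}_f|$, I would mimic the $m=2$ argument inductively. In any $C^*(X)\in\mathcal{M}_f$ the first column $(a_{11},\ldots,a_{m1})^t$ has $\gcd = 1$: any common factor divides $f=\det C^*(X)$ and has degree at most $n<mn$, so must equal $1$ by irreducibility. A Bezout identity supplies some $M\in\GL_m(\Fq[X])$ converting this column to $e_1$, and subsequent column-clearing yields a block-diagonal form $\diag(1,\,C'^*(X))$ with $\det C'^*(X)=f$. In the $m=2$ case this directly solves the problem: the admissible first-column data are coprime pairs $(a,c)$ with $a$ monic of degree $n$ and $\deg c<n$; the entry $b$ is then forced by $bc\equiv-f\pmod{a}$ together with the bound $\deg b<n$; $d=(f+bc)/a$ is automatically monic of degree $n$; and the classical identity $\sum_{a\text{ monic},\,\deg a=n}\phi_q(a)=q^{2n}-q^{2n-1}$ matches the conjectural value. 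For general $m$ I would peel off one dimension per stage, attempting to arrange the stagewise contributions so that the accumulated coprimality conditions produce the factor $\prod_{i=1}^{m-1}(q^m-q^i)$ while the degree-bounded but otherwise free entries contribute $q^{m(m-1)(n-1)}$.

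The principal obstacle is that the $\GL_m(\Fq[X])$-reduction employed above does \emph{not} preserve the shape requirement on $\mathcal{M}_f$: after one stage the residual block $C'^*(X)$ has determinant $f$ but generically fails the monic-degree-$n$-diagonal and degree-$<n$-off-diagonal constraints, so the problem does not immediately recur as the $(m-1)$-dimensional instance of itself. Setting up a correct bijection between $\mathcal{M}_f$ and pairs $(M,C'^*)$ for which the composite recombines into the prescribed shape requires joint bookkeeping of degree data on $M$ and on $C'^*$, and the telescoping of the resulting count into the conjectural formula appears to demand a \emph{multivariate coprime-polynomial identity} over $\Fq[X]$ extending the single-pair input exploited for $m=2$ (as in the work of Gao--Panario and Garc\'{i}a-Armas--Ghorpade--Ram). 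Producing and proving such an identity — equivalently, establishing Niederreiter's conjecture for $m\ge 3$ — is the hard part; the first genuinely new case $m=3$, where three pairwise coprimality conditions appear to enter simultaneously, would be the natural testing ground and should indicate the correct form of the required polynomial-tuple identity.
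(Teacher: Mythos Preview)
Your proposal does not prove Conjecture~\ref{FibConj2} in full, but neither does the paper: the Irreducible Fiber Conjecture is explicitly left open for $m\ge 3$, and the paper establishes only the case $m=2$ (Theorem~\ref{mtwoconj}), the equivalence with Niederreiter's splitting-subspace problem (Section~\ref{sec:splitsubsp}), and an asymptotic result (Theorem~\ref{asymptformula}). Your recognition that the general case hinges on an as-yet-unavailable multivariate coprime-polynomial identity is therefore in line with the paper's own position, and the obstacle you name --- that the $\GL_m(\Fq[X])$-reduction fails to preserve the shape constraints on $\mathcal{M}_f$ --- is a genuine one.

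Where you do prove things, your route differs from the paper's. Your reduction to splitting subspaces goes through the matrix polynomial $C^*(X)$, its Smith normal form over $\Fq[X]$, and the cokernel $\Fq[X]^m/C^*(X)\Fq[X]^m$; the paper reaches the same equivalence \eqref{fiberandN} by working directly with the linear map $L_{\alpha}$ and ordered bases $\bavm$ (Lemmas~\ref{lalpham}, \ref{centralizerm}, and Corollary~\ref{SandN}). For $m=2$, the paper counts ordered bases $\bav$ by identifying the forbidden values of $\beta=v_2/v_1$ with the set $\Sigma_{\alpha}$ of ratios of low-degree polynomials in $\alpha$ and then invoking Benjamin--Bennett via Corollary~\ref{corbb}; you instead parametrise $\mathcal{M}_f$ by its first column $(a,c)$, use irreducibility of $f$ to force $\gcd(a,c)=1$, recover $(b,d)$ uniquely from $bc\equiv -f\pmod a$, and finish with the Euler-sum identity $\sum_{a\text{ monic},\,\deg a=n}\phi_q(a)=q^{2n}-q^{2n-1}$. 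Both arguments ultimately rest on the same coprime-pair enumeration, but your matrix-polynomial packaging bypasses the basis count and the centralizer division of Lemma~\ref{centralizer}, and makes more transparent why the inductive step for $m\ge 3$ should demand a tuple-level generalisation of that enumeration.
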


It is clear that if Conjecture \ref{FibConj2} holds in the affirmative, then so do Conjecture \ref{FibConj1} and Conjecture \ref{conj0}. We may refer to Conjecture \ref{FibConj2} as the 
{\em Irreducible Fiber Conjecture}. Moreover, Conjecture \ref{FibConj1}, which has hitherto been called \emph{Fiber Conjecture},  may now be referred to as the {\em Primitive Fiber Conjecture}. 

\section{Relatively Prime Polynomials} 
\label{sec:relprimepoly}

Let us begin by recalling a result about relatively prime polynomials, namely, 
{\cite[Prop. 3]{CSWZ}} (see also \cite[Exer. 5 of \S 4.6.1]{K} and \cite[Thm. 4.1]{GGR}), 
which was alluded to in the introduction. In this section, $r$ will denote an integer $\ge 2$ and, as before, $n$ is a fixed positive integer. 

\begin{proposition} 
\label{cswz}
The number of coprime $r$-tuples of monic polynomials of
degree $n$ over $\Fq$ is $q^{rn}- q^{r(n-1)+1}$. Alternatively, if $r$ monic polynomials 
in $\Fq[X]$ are chosen independently and uniformly at random, then the probability that 
they are relatively prime is $1-1/q^{r-1}$. 
\end{proposition}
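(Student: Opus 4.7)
The natural plan is to stratify the $r$-tuples of monic polynomials of degree $n$ by their greatest common divisor. Every $r$-tuple $(f_1,\dots,f_r)$ of monic polynomials in $\Fq[X]$ of degree $n$ has a unique monic gcd, say $d$ of some degree $k$ with $0\le k\le n$; writing $f_i = d\, g_i$ identifies such tuples with pairs consisting of a monic polynomial $d$ of degree $k$ (of which there are $q^k$) and a \emph{coprime} $r$-tuple of monic polynomials of degree $n-k$. Letting $C_r(\ell)$ denote the number of coprime $r$-tuples of monic polynomials of degree $\ell$ over $\Fq$, with the convention $C_r(0)=1$, this decomposition gives the convolution identity
$$
q^{rn} \;=\; \sum_{k=0}^{n} q^k\, C_r(n-k) \qquad \text{for all } n\ge 0.
$$

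To extract $C_r(n)$, I would pass to generating functions. Setting $F(z) := \sum_{\ell\ge 0} C_r(\ell)\, z^\ell$, the identity above reads
$$
\frac{1}{1-q^r z} \;=\; \sum_{n\ge 0} q^{rn} z^n \;=\; \left( \sum_{k\ge 0} q^k z^k \right) F(z) \;=\; \frac{F(z)}{1-qz},
$$
so that $F(z) = (1-qz)/(1-q^r z)$. Expanding the right-hand side as $(1-qz)\sum_{n\ge 0} q^{rn} z^n$ and reading off the coefficient of $z^n$ for $n\ge 1$ gives $C_r(n) = q^{rn} - q^{r(n-1)+1}$, which is the desired enumeration. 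Dividing by the total count $q^{rn}$ then yields the probability $1-q/q^r = 1-1/q^{r-1}$.

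There is essentially no obstacle beyond spotting the gcd-stratification; the rest is a routine algebraic manipulation. One could equally well solve the recurrence directly by induction on $n$: once $C_r(\ell) = q^{r\ell} - q^{r(\ell-1)+1}$ is known for all $\ell<n$, substitution into $q^{rn}-\sum_{k=1}^{n} q^k C_r(n-k)$ telescopes to produce $C_r(n)$. The statement admits a number of alternative proofs (inclusion–exclusion over monic irreducible divisors, or a bijective argument building the coprime tuple one irreducible at a time), but the generating-function derivation above is the shortest and self-checking.
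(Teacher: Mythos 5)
Your argument is correct. Note, however, that the paper itself gives no proof of this proposition: it is quoted as a known result from Corteel--Savage--Wilf--Zeilberger (and Knuth's exercise), so there is no in-paper argument to compare against. Your gcd-stratification is the standard route: factoring out the unique monic gcd $d$ of degree $k$ gives the convolution $q^{rn}=\sum_{k=0}^{n}q^{k}C_r(n-k)$, and the generating-function inversion $F(z)=(1-qz)/(1-q^r z)$ correctly yields $C_r(n)=q^{rn}-q^{r(n-1)+1}$ for $n\ge 1$ (the case $n=0$, where $C_r(0)=1$, is correctly excluded, and your convention $C_r(0)=1$ is what makes the $k=n$ stratum of the convolution come out right). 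The only thing worth making explicit is that the map sending $(d,(g_1,\dots,g_r))$ to $(dg_1,\dots,dg_r)$ is a bijection onto the tuples with gcd exactly $d$ precisely because $\gcd(dg_1,\dots,dg_r)=d\gcd(g_1,\dots,g_r)$ and monic representatives are unique; you state this implicitly and it is routine. Your proof is a clean, self-contained justification of a statement the paper takes on faith.
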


A special case 
of the above result implies that there is a $50\%$ chance that two monic polynomials of a given positive degree in $\FF_2[X]$ are coprime. 
With this in view, Corteel, Savage, Wilf, and Zeilberger \cite{CSWZ} asked for an explicit bijection between coprime and non-coprime 
pairs of monic polynomials of a given positive degree in $\FF_2[X]$. 
A nice answer was given by Benjamin and Bennett 
who proved, more generally, the following result in \cite[Cor. 6]{BB}.
 
\begin{proposition} 
\label{bb}
If $r$ polynomials of degree lees than $n$ in $\Fq[X]$ are randomly chosen, then the probability that they are relatively prime is 
$$
1 - \frac{1}{q^{r-1}} + \frac{q-1}{q^{rn}}.
$$
\end{proposition}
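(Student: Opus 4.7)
The plan is to prove Proposition \ref{bb} by Möbius inversion in $\Fq[X]$. Let $N$ denote the number of coprime $r$-tuples $(f_1,\dots,f_r)$ with each $f_i$ of degree less than $n$; the claim is equivalent to showing $N = q^{rn} - q^{rn-r+1} + q - 1$.

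First I would invoke the identity $\sum_{g \mid \gcd(f_1,\dots,f_r)} \mu(g) = [\gcd(f_1,\dots,f_r)=1]$, valid for every tuple other than $(0,\dots,0)$ and with the sum taken over monic $g \in \Fq[X]$. After excluding the zero tuple and exchanging the order of summation, this gives
$$ N = \sum_{g \text{ monic}} \mu(g)\bigl(M_g^r - 1\bigr), $$
where $M_g$ is the number of polynomials of degree less than $n$ divisible by $g$. A direct count shows $M_g = q^{n-\deg g}$ when $\deg g < n$ and $M_g = 1$ when $\deg g \geq n$, so the latter terms vanish.

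Next I would evaluate $a_d := \sum_{\deg g = d,\, g \text{ monic}} \mu(g)$. Since $\sum_{g \text{ monic}} T^{\deg g} = 1/(1-qT)$, the reciprocal generating function identity $\sum_{d \geq 0} a_d T^d = 1 - qT$ forces $a_0 = 1$, $a_1 = -q$, and $a_d = 0$ for $d \geq 2$. Plugging this into
$$ N = \sum_{d=0}^{n-1} a_d\bigl(q^{r(n-d)} - 1\bigr) $$
collapses the sum to $(q^{rn} - 1) - q(q^{r(n-1)} - 1) = q^{rn} - q^{rn-r+1} + q - 1$, as desired.

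The only subtle point is the treatment of the all-zero tuple: naively $\sum_{g \mid 0} \mu(g)$ is ill-defined, since every monic $g$ formally divides $0$. The careful accounting—subtracting the zero tuple from the count of multiples of $g$ of degree less than $n$—is what produces the $-1$ in each summand and ultimately yields the correction term $q-1$ that distinguishes Proposition \ref{bb} from Proposition \ref{cswz}. Verifying the formula in the edge case $n=1$ (where only $a_0$ contributes) provides a useful sanity check.
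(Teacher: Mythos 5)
Your proof is correct. Note, however, that the paper does not actually prove Proposition \ref{bb}: it imports the statement from Benjamin and Bennett \cite[Cor. 6]{BB}, whose argument is bijective in spirit (it answers the request of Corteel, Savage, Wilf, and Zeilberger for an explicit correspondence, essentially by an involution built on the division algorithm). Your route via M\"obius inversion over monic divisors of $\Fq[X]$ is therefore genuinely different and fully self-contained. The key steps all check out: $M_g = q^{n-\deg g}$ for $\deg g < n$ and $M_g = 1$ otherwise (so those terms vanish); the coefficient identity $\sum_{\deg g = d}\mu(g) = 1, -q, 0$ for $d=0$, $d=1$, $d\ge 2$, which follows from $\sum_g \mu(g)T^{\deg g} = 1-qT$; and the careful exclusion of the all-zero tuple, which is exactly what produces the $-1$ in each summand and hence the correction term $q-1$ separating this count from Proposition \ref{cswz}. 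The resulting value $N = (q^{rn}-1) - q(q^{r(n-1)}-1) = q^{rn} - q^{rn-r+1} + q - 1$ is the desired one, and your $n=1$ check ($N = q^r - 1$) is consistent. What each approach buys: the M\"obius computation is short, requires no cleverness, and generalizes immediately (e.g., to counting tuples whose gcd has prescribed degree, as in the cited work of Gao and Panario); the bijective proof of Benjamin and Bennett explains combinatorially \emph{why} the answer is so clean, which was the original motivation in \cite{CSWZ}.
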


For our purpose, the following consequence 
of the above result will be useful. 

%

\begin{corollary}
\label{corbb}
Let $\Sigma$ denote the set of pairs $(f,g)$ of nonzero polynomials in $\Fq[X]$ of degree $< n$ such that $f$ and $g$ are  relatively prime and moreover $g$ is monic. Then the cardinality of $\Sigma$ is equal to $(q^{2n-1} - 1)$. 
\end{corollary}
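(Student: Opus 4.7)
The plan is to deduce the corollary directly from Proposition \ref{bb} with $r=2$ by correcting for two features not explicitly built into that result. Proposition \ref{bb} allows the zero polynomial to appear among its $q^{2n}$ pairs of polynomials of degree less than $n$, whereas $\Sigma$ requires both $f$ and $g$ to be nonzero and, in addition, demands that $g$ be monic.

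First, I would apply Proposition \ref{bb} with $r=2$ to count coprime pairs $(f,g)$ of polynomials of degree less than $n$ in $\Fq[X]$ with no restriction on zeros: since there are $q^{2n}$ such pairs, the total count of coprime pairs is
$$
q^{2n}\left(1 - \frac{1}{q} + \frac{q-1}{q^{2n}}\right) = q^{2n} - q^{2n-1} + q - 1.
$$
From these, I would subtract the coprime pairs having a zero entry: the pair $(0,g)$ is coprime precisely when $g$ is a unit of $\Fq[X]$, giving $q-1$ such pairs, and symmetrically there are $q-1$ of the form $(f,0)$ with $f \in \Fq^*$. Removing these $2(q-1)$ contributions yields
$$
q^{2n} - q^{2n-1} - q + 1 = (q-1)\bigl(q^{2n-1} - 1\bigr)
$$
coprime pairs with both $f$ and $g$ nonzero of degree less than $n$.

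To impose the final monic constraint on $g$, I would exploit the scaling action of $\Fq^*$ on the second coordinate: replacing $g$ by $\lambda g$ for $\lambda \in \Fq^*$ preserves nonzero-ness, the degree of $g$, and coprimality with $f$. This action partitions the set of nonzero coprime pairs into orbits of size $q-1$, each containing a unique representative with $g$ monic. Dividing the previous count by $q-1$ produces the desired $|\Sigma| = q^{2n-1} - 1$. The only thing to be careful about is the handling of the zero-polynomial edge cases in the subtraction step, which requires the standard convention that $(f,g)$ is coprime exactly when $\gcd(f,g)$ is a unit; beyond this bookkeeping, no additional ingredient is needed.
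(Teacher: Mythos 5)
Your proposal is correct and follows essentially the same route as the paper's own proof: apply Proposition \ref{bb} with $r=2$ to get $(q^{2n-1}+1)(q-1)$ coprime pairs, subtract the $2(q-1)$ coprime pairs involving the zero polynomial (which are exactly those with a unit in the other slot), and divide by $q-1$ to normalize $g$ to be monic. No gaps; the scaling-orbit justification for the final division is the same observation the paper makes implicitly.
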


\begin{proof}
Since the number of pairs of polynomials of degree $<n$ in $\Fq[X]$ is $q^{2n}$, by Proposition \ref{bb}, the number of coprime pairs 
of polynomials in $\Fq[X]$ of degree $< n$ is equal to $(q^{2n-1}+1)(q-1)$. 
Now, as per the standard conventions, the only polynomials that are coprime to the zero polynomial are the nonzero constant polynomials. 
Hence  if $\Sigma_1$ denotes the set of coprime pairs 
of nonzero polynomials in $\Fq[X]$ of degree $< n$, then 
$\left|\Sigma_1\right|=(q^{2n-1}+1)(q-1) - 2(q-1)=(q^{2n-1}-1)(q-1)$. 
Finally, since $\Sigma = \{(f,g)\in \Sigma_1 : g \text{ is monic}\}$, it follows that $\left|\Sigma\right|=\left|\Sigma_1\right|/(q-1)$.
\end{proof}
\section{The Case $m=2$} 
\label{sec:mtwo} 

%
Given any $\alpha, v_1, v_2\in \Fqtn$, we let 
$$
\bav:=\left\{v_1, \, v_2, \, \alpha v_1, \, \alpha v_2, \, \dots , \,  \alpha^{n-1} v_1,\, \alpha^{n-1} v_2\right\},
$$  
with the proviso that $\bav$ is to be regarded as an ordered set with $2n$ elements; in most applications 
it will be an ordered basis of $\Fqtn$ over $\Fq$. 
Our first step is to relate the fibers of $\Theta$ to ordered bases of the form $\bav$.
 
\begin{lemma}
\label{lalpha}
Let $f\in \itn$ and let $\alpha\in \Fqtn$ be a root of $f$. As before, let $L_{\alpha}: \Fqtn \to \Fqtn$ denote the $\Fq$-linear transformation 
defined by $L_{\alpha}(x) := \alpha x$ for $x\in \Fqtn$, and let $T\in \M_{2n}(\Fq)$. 
Then
$T \in  \Theta^{-1}(f)$ if and only if $T$ is the matrix of $L_{\alpha}$ with respect to an ordered basis of the form $\bav$ for some $v_1, v_2\in \Fqtn$.
\end{lemma}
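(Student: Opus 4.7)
The plan is to prove both implications by explicitly matching the block companion form of $T$ against the matrix of $L_\alpha$ with respect to a basis of the shape $\bav$. For the ``if'' direction, I would assume $T$ is the matrix of $L_\alpha$ with respect to an ordered basis $\bav$ and simply read off the columns. For $0\le k\le n-2$ and $j\in\{1,2\}$, $L_\alpha(\alpha^k v_j)=\alpha^{k+1}v_j$ is itself the next ``same-$j$'' vector in $\bav$; this accounts for the $I_2$ blocks on the block subdiagonal of $T$ and the zero blocks in the first $n-1$ block columns. The final two columns come from expanding $L_\alpha(\alpha^{n-1}v_j)=\alpha^n v_j$ (for $j=1,2$) in $\bav$, producing the matrices $C_0,\ldots,C_{n-1}\in \M_2(\Fq)$. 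Hence $T$ is a $(2,n)$-block companion matrix. Since $\alpha$ generates $\Fqtn$ over $\Fq$, the minimal polynomial of $L_\alpha$ is $f$, and it coincides with the characteristic polynomial because $\deg f=2n=\dim_{\Fq}\Fqtn$; thus $\Theta(T)=f$.

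For the ``only if'' direction, I would start with $T\in\Theta^{-1}(f)$. By Cayley--Hamilton $f(T)=0$, and irreducibility of $f$ forces $f$ to be the minimal polynomial of $T$ as well. The resulting $\Fq[X]$-module structure on $\Fq^{2n}$ then factors through $\Fq[X]/(f)\cong \Fq(\alpha)=\Fqtn$, turning $\Fq^{2n}$ into an $\Fqtn$-vector space on which the scalar $\alpha$ acts as $T$. A dimension count gives $\dim_{\Fqtn}\Fq^{2n}=1$, so I can fix an $\Fqtn$-linear isomorphism $\varphi\colon\Fq^{2n}\xrightarrow{\sim}\Fqtn$ satisfying $\varphi\circ T=L_\alpha\circ\varphi$. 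Next I would set $v_j:=\varphi(e_j)$ for $j=1,2$, where $e_1,\ldots,e_{2n}$ denotes the standard basis of $\Fq^{2n}$. The block companion shape of $T$ gives $Te_i=e_{i+2}$ for $i=1,\ldots,2n-2$, and applying $\varphi$ and iterating yields $\varphi(e_{2k+1})=\alpha^k v_1$ and $\varphi(e_{2k+2})=\alpha^k v_2$ for $0\le k\le n-1$. Thus $\varphi$ carries the standard ordered basis of $\Fq^{2n}$ precisely to the $2n$-tuple $\bav$, which is therefore an ordered $\Fq$-basis of $\Fqtn$, and $T$ is the matrix of $L_\alpha$ in this basis.

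The anticipated obstacle is the converse direction: one has to recognize that irreducibility of $f$ upgrades $\Fq^{2n}$ into a one-dimensional $\Fqtn$-vector space and then extract $v_1,v_2$ intrinsically as the images of $e_1,e_2$ under that identification. Once this $\Fqtn$-structure is in hand, the rest is essentially unpacking the block companion form against the column-by-column matrix convention.
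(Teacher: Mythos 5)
Your proof is correct and follows essentially the same route as the paper's: both directions hinge on the fact that a matrix with monic irreducible characteristic polynomial $f$ is similar to the companion matrix of $f$, i.e.\ represents $L_\alpha$ in some ordered basis, after which the block companion shape forces that basis to have the form $\bav$. The only cosmetic difference is that you obtain the similarity via the $\Fq[X]/(f)\cong\Fqtn$ module structure rather than by citing equality of invariant factors, and your explicit computation $Te_i=e_{i+2}\Rightarrow\varphi(e_{2k+j})=\alpha^k v_j$ spells out the step the paper asserts in one sentence.
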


\begin{proof}
Since $f$ is irreducible, $\left\{1, \alpha,  \dots , \alpha^{2n-1}\right\}$ is an $\Fq$-basis of $\Fqtn$. Moreover, since $f$ is also monic, the matrix of $L_{\alpha}$ with respect to this basis is precisely the companion matrix $\Cf$ of $f$. 

Suppose $T \in  \Theta^{-1}(f)$. Then the monic irreducible polynomial $f$ is the characteristic polynomial of $T$. It follows that $T$ and
$\Cf$ have the same invariant factors and hence they are similar. Consequently, $T$ is the matrix of $L_{\alpha}$ with respect to some ordered 
$\Fq$-basis $\B$ of $\Fqtn$. Further since $T$ is a $(2,n)$-block companion matrix, we see that $\B$ must be of the form $\bav$ for some $v_1, v_2\in \Fqtn$. 

Conversely, suppose $T$ is the matrix of $L_{\alpha}$ with respect to an ordered basis of the form $\bav$ for some $v_1, v_2\in \Fqtn$. Then 
$T$ is clearly a $(2,n)$-block companion matrix and moreover, $T$ is similar to $\Cf$. It follows that $T \in  \Theta^{-1}(f)$.
\end{proof}

The next step is to count the number of ordered bases of the form $\bav$, and this is where the results of the previous section will turn out to be handy. 

\begin{lemma}
\label{nobases}
Fix $f\in \itn$ and a root $\alpha\in \Fqtn$ of $f$. Then the number of ordered bases of the form $\bav$, as $v_1, v_2$ vary over $\Fqtn$, is equal to $q^{2n-1}(q-1)(q^{2n} - 1)$.
\end{lemma}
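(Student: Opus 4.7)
My plan is to reduce the count to a bijective enumeration that uses the coprime-pair result of Corollary~\ref{corbb}. First, set $U := \Fq\text{-span}\{1, \alpha, \ldots, \alpha^{n-1}\}$, an $n$-dimensional $\Fq$-subspace of $\Fqtn$. I observe that $\bav$ is a basis of $\Fqtn$ over $\Fq$ iff both $v_1, v_2 \in \Fqtn^{\ast}$ and $V_1 \cap V_2 = 0$, where $V_i := v_i \, U$ is the $\Fq$-span of $\{v_i, \alpha v_i, \dots, \alpha^{n-1} v_i\}$. Each $V_i$ is indeed $n$-dimensional, because $v_i \ne 0$ and no nonzero polynomial of degree $< 2n = \deg f$ vanishes at $\alpha$. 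Rescaling by $v_1^{-1}$, the condition $V_1 \cap V_2 = 0$ becomes $U \cap w\,U = 0$ with $w := v_2/v_1 \in \Fqtn^{\ast}$. Hence the total count equals $(q^{2n}-1)\cdot N$, where $N := |\{w \in \Fqtn^{\ast} : U \cap wU = 0\}|$, and it suffices to show $N = q^{2n-1}(q-1)$.

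Next, I would biject the complementary ``bad'' set $B := \{w \in \Fqtn^{\ast} : U \cap w U \ne 0\}$ with the set $\Sigma$ of Corollary~\ref{corbb}. A nonzero $w$ lies in $B$ iff there exist $h_1, h_2 \in \Fq[X]$ of degree $< n$, not both zero, with $h_1(\alpha) = w\, h_2(\alpha)$; degree considerations together with $w \ne 0$ force both $h_1$ and $h_2$ to be nonzero. Dividing out $\gcd(h_1, h_2)$ and normalizing so that the resulting denominator is monic produces a canonical pair in $\Sigma$. For the uniqueness of this canonical pair — the main subtle point — if $w = h_1(\alpha)/h_2(\alpha) = h_1'(\alpha)/h_2'(\alpha)$ with both pairs satisfying the constraints, then $f \mid (h_1 h_2' - h_2 h_1')$; however, this difference has degree at most $2n - 2 < \deg f$, so $h_1 h_2' = h_2 h_1'$ as elements of the UFD $\Fq[X]$, and the usual uniqueness of reduced fractions forces $(h_1, h_2) = (h_1', h_2')$. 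Conversely, any $(h_1, h_2) \in \Sigma$ yields $w := h_1(\alpha)/h_2(\alpha) \in B$, so $|B| = |\Sigma| = q^{2n-1} - 1$.

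Combining the two steps, $N = (q^{2n}-1) - |B| = q^{2n} - q^{2n-1} = q^{2n-1}(q-1)$, so the total number of ordered bases of the form $\bav$ is $(q^{2n}-1)\cdot q^{2n-1}(q-1)$, matching the claimed value. The principal obstacle is the uniqueness half of the bijection: although $\Fqtn$ carries no intrinsic notion of ``reduced fraction,'' the imposed bound $\deg h_i < n$ (which is strictly less than half of $\deg f = 2n$) is exactly what forces the cross-multiplication identity to hold in $\Fq[X]$ rather than merely modulo $f$, and the enumeration hinges on this sharp degree comparison together with the sharp count $|\Sigma| = q^{2n-1} - 1$ provided by \cite{BB}.
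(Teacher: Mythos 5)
Your proposal is correct and follows essentially the same route as the paper: reduce to counting $w=v_2/v_1$ via the $(q^{2n}-1)$-fold rescaling, identify the ``bad'' set of $w$ with the set $\Sigma$ of Corollary~\ref{corbb} using the degree bound $2n-2<\deg f$ to get uniqueness of the reduced representation, and conclude $N=(q^{2n}-1)-(q^{2n-1}-1)=q^{2n-1}(q-1)$. The only cosmetic difference is that you phrase the independence condition as $U\cap wU=0$ rather than as linear independence of the set $\{1,\beta,\alpha,\alpha\beta,\dots,\alpha^{n-1},\alpha^{n-1}\beta\}$.
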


\begin{proof}
First, fix any $v_1\in \Fqtn$ with $v_1\ne 0$. Then for any $v_2\in \Fqtn$, the ordered set $\bav$ is an $\Fq$-basis of $\Fqtn$ if and only if the ordered set
$$
\Sb := \left\{1, \, \beta, \, \alpha , \, \alpha \beta, \, \dots , \,  \alpha^{n-1} ,\, \alpha^{n-1} \beta\right\}
$$  
is linearly independent over $\Fq$, where $\beta: = v_2/v_1$. Now, $1, \alpha, \dots ,\alpha^{2n-1}$ are linearly independent over $\Fq$ and in particular, so are $1, \alpha, \dots ,\alpha^{n-1}$. Thus for any $\beta \in \Fqtn^*$, the ordered set $\Sb$ is $\Fq$-independent if and only if $\beta$ cannot be expressed as 
$$
\frac{a_0+ a_1 \alpha + \cdots + a_{n-1} \alpha^{n-1}}{b_0+ b_1 \alpha + \cdots + b_{n-1} \alpha^{n-1}}
$$
for some $a_i,b_i\in \Fq$ such that not all $a_i$ are zero and not all $b_i$ are zero ($0\le i\le n-1$). It follows that 
$\big\{\beta \in \Fqtn^* : \Sb \text{ is linearly independent}\big\} =  \Fqtn^* \setminus \Sigma_{\alpha}$, where 
$$
\Sigma_{\alpha} : = \left\{\frac{f(\alpha)}{g(\alpha)} : f,g\in \Fq[X]^*, \ \deg (f) \le n-1, \text{ and } \deg (g) \le n-1\; \right\}.
$$
Now if $\Sigma$ is as in Corollary \ref{corbb}, then the map $\Sigma \to \Sigma_{\alpha}$ given by $(f,g)\mapsto f(\alpha)/g(\alpha)$ is 
clearly well defined and surjective. Moreover, if $(f_1,g_1), (f_2,g_2)\in \Sigma$ are such that $f_1(\alpha)g_2(\alpha)= f_2(\alpha)g_1(\alpha)$, then $f_1(X)g_2(X)=f_2(X)g_1(X)$ because the minimal polynomial of $\alpha$ over $\Fq$ has degree $2n$. Further since $f_i$ and $g_i$ are coprime for $i=1,2$ and since $g_1, g_2$ are monic, it follows that $g_1=g_2$ and therefore $f_1=f_2$. Thus $\Sigma_{\alpha}$ is in bijection with $\Sigma$, 
and hence by Corollary \ref{corbb}, 
$$
\left|\left\{\beta \in \Fqtn^* : \Sb \text{ is linearly independent }\right\}\right| = (q^{2n}-1) - (q^{2n-1}-1) = q^{2n-1}(q-1).
$$
Finally, if we vary $v_1$ over the $(q^{2n}-1)$ elements of $\Fqtn^*$, then we readily see that the number of ordered bases of the form $\bav$ is 
equal to $q^{2n-1}(q-1)(q^{2n}-1)$.
\end{proof}

It is possible that two different bases of the form $\bav$ can give rise to the same matrix. This redundancy can be quantified using the centralizer.

\begin{lemma}
\label{centralizer}
Let $f, \alpha$ and $L_{\alpha}$ be as in Lemma \ref{lalpha}. Then there are exactly $(q^{2n}-1)$ ordered bases of the form $\bav$ such that that the matrix of $L_{\alpha}$ with respect to each of these bases is the same. 
\end{lemma}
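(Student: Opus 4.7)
The plan is to use a centralizer argument. I will fix one ordered basis $\B = \bav$ that gives rise to a particular matrix $T$ (i.e., $T$ is the matrix of $L_\alpha$ with respect to $\B$), and then parametrize all other bases of the form $\baw$ that give the same $T$.

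First, I would observe that if $\B' = \baw$ also yields matrix $T$, then the unique $\Fq$-linear map $S: \Fqtn \to \Fqtn$ sending the $j$-th vector of $\B$ to the $j$-th vector of $\B'$ is invertible and must commute with $L_\alpha$. The computation is the usual one: the equality of matrices translates to $S L_\alpha(e_j) = L_\alpha S(e_j)$ on each basis vector $e_j$ of $\B$, and hence on all of $\Fqtn$. So $S$ lies in the $\Fq$-algebra centralizer $Z(L_\alpha) \subseteq \mathrm{End}_{\Fq}(\Fqtn)$.

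Next I would identify this centralizer. Because $f$ is irreducible of degree $2n$ equal to $\dim_{\Fq} \Fqtn$, the minimal polynomial of $L_\alpha$ equals its characteristic polynomial $f$, so $L_\alpha$ is a cyclic operator. Standard linear algebra then gives $Z(L_\alpha) = \Fq[L_\alpha] \cong \Fq[X]/(f) \cong \Fqtn$, where the last isomorphism sends $L_\alpha$ to $\alpha$; in this identification $Z(L_\alpha)$ acts on $\Fqtn$ as multiplication by elements of $\Fqtn$. Therefore the invertible $S$ above is just $L_\mu$ for some $\mu \in \Fqtn^*$, giving $w_1 = \mu v_1$ and $w_2 = \mu v_2$ (and automatically $\alpha^i w_j = L_\mu(\alpha^i v_j)$ since $L_\mu$ commutes with $L_\alpha$).

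Conversely, for every $\mu \in \Fqtn^*$, the ordered tuple with $w_1 = \mu v_1$, $w_2 = \mu v_2$ gives a basis $\baw$ (since $L_\mu$ is an invertible $\Fq$-linear self-map of $\Fqtn$) whose associated matrix for $L_\alpha$ is again $T$ (since $L_\mu$ intertwines $L_\alpha$ with itself). Distinct $\mu$'s yield distinct bases because $v_1 \ne 0$ forces $\mu$ to be recovered from $w_1/v_1$. This gives exactly $|\Fqtn^*| = q^{2n}-1$ bases, as claimed. I do not anticipate a genuine obstacle; the only point needing a little care is verifying that the centralizer of a cyclic operator is the polynomial algebra it generates, which is a well-known fact that I would cite rather than reprove.
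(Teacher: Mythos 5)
Your proposal is correct and follows essentially the same route as the paper: both identify the set of bases yielding the same matrix with the invertible elements of the centralizer of the cyclic operator $L_{\alpha}$, invoke the fact (Frobenius) that this centralizer is $\Fq[L_{\alpha}]\cong\Fq[X]/(f)$, and conclude the count is $q^{2n}-1$. Your explicit identification of the centralizer elements as the multiplication maps $L_{\mu}$, $\mu\in\Fqtn^*$, is a slightly more concrete rendering of the same argument, and correctly handles the point that the transformed basis is again of the form $\baw$.
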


\begin{proof}
Suppose $T$ is the matrix of $L_{\alpha}$ with respect to an ordered basis $\bav$  for some $v_1, v_2\in \Fqtn$. 
If $T$ is also the matrix of $L_{\alpha}$ with respect to $\baw$ for some $w_1, w_2\in \Fqtn$, then the 
``change of basis matrix'' that transforms $\bav$ into $\baw$ is a 
$2n \times 2n$ invertible matrix $P$ over $\Fq$ 
with the property that $P^{-1}TP = T$. Conversely if $P\in \GL_{2n}(\Fq)$ is in the centralizer $Z(T)$, that is, if $P^{-1}TP = T$, then $P$ transforms $\bav$ into an ordered basis with respect to which the matrix of $L_{\alpha}$ is $T$ and (therefore) it is necessarily of the form $\baw$ for some $w_1, w_2\in \Fqtn$. It follows that the desired number of ordered bases is $|Z(T)|$. Finally, since the linear transformation $L_{\alpha}$ is cyclic with $f$ as its minimal (as well as characteristic) polynomial, by a 
theorem of Frobenius \cite[Thm. 3.16 and its corollary]{jac}, we see that $Z(T)$ consists only of polynomials in $T$.  
Consequently, $Z(T)\cup \{0\}$ is the $\Fq$-algebra of polynomials in $T$, which is isomorphic to $\Fq[X]/\left\langle f\right\rangle$.
Hence $|Z(T)|=q^{2n}-1$. 
\end{proof}

The following result shows that Conjectures \ref{FibConj2}, \ref{FibConj1}, and \ref{conj0} hold in the affirmative when $m=2$.

\begin{theorem}
\label{mtwoconj}
$\left|\Theta^{-1}\left(f\right)\right| = q^{2n-1} (q-1)$
for any $f\in \itn$.  In particular, $\left|\Psi^{-1}\left(f\right)\right| = q^{2n-1} (q-1)$
for any $f\in \ptn$. Consequently, 
$$
\left|\bcmst \right| = \frac{\phi(q^{2n}-1)}{2n} \, q^{2n-1} (q-1) 
$$
and
$$ 
\left|\bcmit \right| = \frac{1}{2n} \left(\sum_{d\mid 2n} \mu\left(\frac{2n}{d}\right)q^d \right)q^{2n-1} (q-1).
$$
\end{theorem}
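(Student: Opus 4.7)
The plan is to combine the three preceding lemmas by a standard fiber-division argument and then to sum over the fibers using the cardinalities recorded in \eqref{cardpmnimn}. I would fix $f \in \itn$ and a root $\alpha \in \Fqtn$ of $f$, and consider the map that sends each ordered basis of the form $\bav$ to the matrix of $L_{\alpha}$ with respect to it. Lemma \ref{lalpha} tells me this map is a well-defined surjection onto $\Theta^{-1}(f)$; Lemma \ref{centralizer} tells me that every one of its fibers has cardinality $q^{2n}-1$; and Lemma \ref{nobases} counts the domain as $q^{2n-1}(q-1)(q^{2n}-1)$. Dividing yields the desired
$$|\Theta^{-1}(f)| \;=\; \frac{q^{2n-1}(q-1)(q^{2n}-1)}{q^{2n}-1} \;=\; q^{2n-1}(q-1).$$

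The corresponding identity for $\Psi^{-1}(f)$ with $f \in \ptn$ then follows immediately from the inclusion $\ptn \subseteq \itn$ and the fact that $\Psi$ is the restriction of $\Theta$. To obtain the two closed-form totals, I would decompose $\bcmst$ and $\bcmit$ as disjoint unions of the fibers of $\Psi$ and $\Theta$ respectively (as noted in the discussion preceding Conjecture \ref{FibConj1}) and multiply the uniform fiber size $q^{2n-1}(q-1)$ by $|\ptn|$ and $|\itn|$ as given in \eqref{cardpmnimn}.

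I do not foresee a genuine obstacle at this final stage: the substantive work has been absorbed into Corollary \ref{corbb} (itself resting on the Benjamin--Bennett coprimality theorem) and Lemmas \ref{lalpha}--\ref{centralizer}. The one conceptual check I would pause on is that Lemma \ref{centralizer}, read correctly, asserts that the number of preimages is $q^{2n}-1$ \emph{independently of which} $T \in \Theta^{-1}(f)$ one picks, so that fiber-division is valid. As a sanity check, the value $q^{2n-1}(q-1)$ agrees with the $m=2$ specialization $q^{2(n-1)}(q^2-q)$ of the formula $q^{m(m-1)(n-1)}\prod_{i=1}^{m-1}(q^m-q^i)$ predicted by Conjecture \ref{FibConj2}, confirming that this theorem does establish Conjectures \ref{FibConj2}, \ref{FibConj1}, and \ref{conj0} for $m=2$.
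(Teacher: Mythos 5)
Your argument is correct and essentially identical to the paper's own proof: the paper likewise divides the basis count of Lemma \ref{nobases} by the uniform fiber size of Lemma \ref{centralizer} via the surjection of Lemma \ref{lalpha}, and then multiplies the resulting fiber cardinality $q^{2n-1}(q-1)$ by the counts in \eqref{cardpmnimn}. The one point to tighten is the passage to $\Psi$: the inclusion $\ptn\subseteq\itn$ together with $\Psi$ being a restriction of $\Theta$ only gives $\Psi^{-1}(f)\subseteq\Theta^{-1}(f)$, and the equality $\left|\Psi^{-1}(f)\right|=\left|\Theta^{-1}(f)\right|$ additionally uses the fact (recorded in Section \ref{charmap} and explicitly invoked in the paper's proof) that a nonsingular matrix whose characteristic polynomial is primitive is automatically a Singer cycle, so that $\Theta^{-1}(f)\subseteq\bcmst$ whenever $f\in\ptn$.
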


\begin{proof}
By Lemmas \ref{lalpha}, \ref{nobases}, and \ref{centralizer}, we readily see that 
$$
\left|\Theta^{-1}\left(f\right)\right| = \frac{q^{2n-1}(q-1)(q^{2n}-1)}{(q^{2n}-1)} = q^{2n-1} (q-1) \quad 
\text{ for any $f\in \itn$.}
$$
Since a nonsingular matrix is a Singer cycle if and only if its characteristic polynomial is primitive \cite[Prop. 3.1]{GSM}, this implies, in particular, that $\left|\Psi^{-1}\left(f\right)\right| = q^{2n-1} (q-1)$
for any $f\in \ptn$. Consequently, we obtain the desired formulae for $\left|\bcmst \right|$ and $\left|\bcmit \right|$ using
\eqref{cardpmnimn} and Proposition~\ref{surjtheta}.
\end{proof}

\section{Splitting Subspaces} 
\label{sec:splitsubsp}

Let $\sigma \in \Fqmn$. Following Niederreiter \cite{N2}, we call an $m$-dimensional $\Fq$-linear subspace $W$ of $\Fqmn$ to be 
\emph{$\sigma$-splitting} if $\Fqmn = W \oplus \sigma W \oplus \cdots \oplus \sigma^{n-1}W$. Define 
$$
S(\sigma, m,n;q) := \text{the number of $\sigma$-splitting subspaces of $\Fqmn$ of dimension $m$}.
$$
Note that for an arbitrary $\sigma\in \Fqmn$, there may not be any $\sigma$-splitting subspace; for example, if $\sigma\in \Fq$, then $\sigma^iW=W$ for every $m$-dimensional subspace $W$ and every $i\ge 0$, and so $W$ cannot be $\sigma$-splitting if $n>1$. But if $n=1$, then   
the only $m$-dimensional subspace, viz., $W=\Fqmn$, is $\sigma$-splitting for every  $\sigma\in \Fqmn$; in particular, $S(\sigma, m,1;q)=1$. 
On the other hand, if $m=1$ and if $\alpha\in \Fqmn=\FF_{q^n}$ is such that $\Fqmn=\Fq(\alpha)$, then every $1$-dimensional subspace is 
$\alpha$-splitting and so $S(\alpha,1,n;q)= (q^n-1)/(q-1)$.  

Determination of $S(\sigma, m,n;q)$, where $\sigma$ is a primitive element of $\Fqmn^*$, is stated as an open problem in \cite[p. 11]{N2} and Professor Niederreiter has informed us that the problem is still open. We shall see below that this problem is essentially equivalent to the Irreducible Fiber Conjecture, and this will allow us to formulate a quantitative version of the problem. 

First, let us observe that some of the notions and results of Section \ref{sec:mtwo} extend readily to the case of arbitrary $m$. 
Given any $\alpha, v_1, \dots, v_m\in \Fqmn$, we let 
$$
\bavm:=\left\{v_1, \dots , v_m, \, \alpha v_1, \dots , \alpha v_m, \, \dots , \,  \alpha^{n-1} v_1,\dots , \alpha^{n-1} v_m\right\}, 
$$  
with the proviso that $\bavm$ is to be regarded as an ordered set with $mn$ elements.
Also, let $L_{\alpha}: \Fqmn \to \Fqmn$ denote the $\Fq$-linear transformation 
defined by $L_{\alpha}(x) := \alpha x$ for $x\in \Fqmn$. Proofs of the following two results are straightforward extensions of the proofs of Lemmas 
\ref{lalpha} and \ref{centralizer} and are left to the reader. 
 
\begin{lemma}
\label{lalpham}
Let $T\in \M_{mn}(\Fq)$, $f\in \imn$ and let $\alpha\in \Fqmn$ be a root of $f$. 
Then
$T \in  \Theta^{-1}(f)$ if and only if $T$ is the matrix of $L_{\alpha}$ with respect to an ordered basis of the form $\bavm$ for some $v_1, \dots v_m\in \Fqmn$.
\end{lemma}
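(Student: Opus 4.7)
The plan is to imitate the proof of Lemma~\ref{lalpha} essentially verbatim, with $m$ in place of~$2$. Since $f\in\imn$ is irreducible of degree $mn$, the set $\{1,\alpha,\alpha^2,\dots,\alpha^{mn-1}\}$ is an $\Fq$-basis of $\Fqmn$, and the matrix of $L_\alpha$ with respect to this ordered basis is precisely the companion matrix $\Cf$ of $f$. This is the reference configuration against which every other candidate will be compared.

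For the forward direction, suppose $T\in\Theta^{-1}(f)$, so that $f=\Phi(T)$. Because $f$ is irreducible and monic, it is both the characteristic and the minimal polynomial of $T$; consequently $T$ and $\Cf$ share the single invariant factor $f$, so they are similar over $\Fq$. Hence there is an $\Fq$-basis $\B=\{w_1,\dots,w_{mn}\}$ of $\Fqmn$ with respect to which $L_\alpha$ is represented by $T$. The $(m,n)$-block companion shape of $T$ (see \eqref{typeT}) forces, for $0\le j\le n-2$ and $1\le i\le m$, the identity
\[
L_\alpha(w_{jm+i}) \;=\; w_{(j+1)m+i},
\]
read off from the $I_m$ blocks. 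Setting $v_i:=w_i$ for $1\le i\le m$ and iterating $L_\alpha$, we obtain $w_{jm+i}=\alpha^j v_i$ for all admissible $j,i$; thus $\B=\bavm$, as required. Note that the action of $L_\alpha$ on the last $m$ basis vectors $\alpha^{n-1}v_1,\dots,\alpha^{n-1}v_m$ is automatically encoded by the rightmost block column $C_0,\dots,C_{n-1}$, so no extra conditions arise.

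Conversely, assume $T$ is the matrix of $L_\alpha$ with respect to an ordered basis of the form $\bavm$ for some $v_1,\dots,v_m\in\Fqmn$. By the very definition of $\bavm$, we have $L_\alpha(\alpha^j v_i)=\alpha^{j+1}v_i$ for $0\le j\le n-2$ and $1\le i\le m$, so the first $m(n-1)$ columns of $T$ reproduce the block-shift pattern with identity blocks $I_m$; consequently $T$ is of the form \eqref{typeT}, i.e., $T$ is a $(m,n)$-block companion matrix. Moreover $T$ and $\Cf$ both represent $L_\alpha$ (in different bases) and are therefore similar, so $\Phi(T)=\Phi(\Cf)=f$ and hence $T\in\Theta^{-1}(f)$.

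There is no genuine obstacle here: the only item that requires care is the bookkeeping in the forward direction, namely verifying that the $(m,n)$-block companion shape of $T$ combined with the action of $L_\alpha$ pins down the basis to the structured form $\bavm$. This is a direct generalization of the $m=2$ argument and does not require any new idea beyond tracking indices in blocks of size~$m$.
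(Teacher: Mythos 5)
Your proof is correct and takes essentially the same approach as the paper, which disposes of this lemma by stating that the argument is the same as for Lemma~\ref{lalpha} (the $m=2$ case): irreducibility of $f$ gives similarity of $T$ to the companion matrix $\Cf$, and the block companion shape forces the basis to have the form $\bavm$. You have simply written out the index bookkeeping that the paper leaves to the reader.
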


\begin{proof} 
Similar to the proof of Lemma \ref{lalpha}.
\end{proof}

\begin{lemma}
\label{centralizerm}
Let $f\in \imn$ and let $\alpha\in \Fqmn$ be a root of $f$. 
Then there are exactly $(q^{mn}-1)$ ordered bases of the form $\bavm$ such that that the matrix of $L_{\alpha}$ with respect to each of these bases is the same. 
\end{lemma}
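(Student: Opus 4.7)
The plan is to carry over the proof of Lemma \ref{centralizer} essentially verbatim, replacing $2$ by $m$ throughout. The only substantive input needed from the $m=2$ case is the cyclicity of $L_\alpha$ together with Frobenius's description of the centralizer, both of which hold without any restriction on $m$.

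More concretely, suppose $T \in \M_{mn}(\Fq)$ arises as the matrix of $L_\alpha$ with respect to some ordered basis $\bavm$. If $\bawm$ is any other such ordered basis giving the same matrix $T$, then the change-of-basis matrix $P \in \GL_{mn}(\Fq)$ carrying $\bavm$ to $\bawm$ satisfies $P^{-1}TP = T$, i.e., $P$ lies in the centralizer $Z(T)$ in $\GL_{mn}(\Fq)$. Conversely, for any $P \in Z(T)$, the image of $\bavm$ under $P$ is an ordered basis of $\Fqmn$ with respect to which the matrix of $L_\alpha$ is again $T$; by Lemma \ref{lalpham} applied to the characteristic polynomial $f$ of $T$, this image basis is automatically of the form $\bawm$ for suitable $w_1,\dots,w_m \in \Fqmn$. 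Hence the number of ordered bases in question equals $|Z(T)|$.

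It remains to compute $|Z(T)|$. Since $f \in \imn$ is irreducible of degree $mn$, the $\Fq$-linear transformation $L_\alpha$ has minimal polynomial equal to its characteristic polynomial $f$, and the same is therefore true of $T$; in particular, $T$ is a cyclic (nonderogatory) matrix. By the theorem of Frobenius cited in Lemma \ref{centralizer} (\cite[Thm. 3.16 and its corollary]{jac}), the centralizer of $T$ in $\M_{mn}(\Fq)$ consists exactly of the polynomials in $T$. Thus $Z(T) \cup \{0\}$ is the $\Fq$-subalgebra $\Fq[T]$, which is isomorphic to $\Fq[X]/\langle f \rangle \cong \Fqmn$ because $f$ is irreducible. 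Consequently $|Z(T)| = q^{mn} - 1$, as required.

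There is no real obstacle: the proof of the $m=2$ case never used $m=2$ in an essential way, and every ingredient (the matching of bases via change-of-basis, the cyclicity of $L_\alpha$, and the Frobenius centralizer description) generalizes immediately. The slight care needed is only to confirm — via Lemma \ref{lalpham} — that any basis producing the given matrix $T$ really is of the form $\bawm$, so that the count $|Z(T)|$ is exactly the count we want, with no overcounting or undercounting.
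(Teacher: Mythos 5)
Your proof is correct and is essentially the paper's own proof: the paper simply says ``Similar to the proof of Lemma \ref{centralizer},'' and your argument is precisely that proof with $2$ replaced by $m$, using the change-of-basis correspondence with $Z(T)$ and the Frobenius centralizer theorem to get $|Z(T)|=q^{mn}-1$. The one small imprecision is that the claim that any basis realizing $T$ has the form $\bawm$ follows not from the statement of Lemma \ref{lalpham} but from the block companion shape of $T$ (as in the proof of Lemma \ref{lalpha}); this is exactly the justification the paper itself relies on.
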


\begin{proof}
Similar to the proof of Lemma \ref{centralizer}.
\end{proof}

Determining the number of bases of the form $\bavm$ seems quite difficult, in general, but we can certainly give this a name. Thus, for any $\alpha\in \Fqmn$ such that $\Fqmn = \Fq(\alpha)$, we define
$$
N(\alpha, m,n;q) := \text{the number of ordered bases of $\Fqmn$ of the form $\bavm$}. 
$$
As an immediate consequence of Lemmas \ref{lalpham} and \ref{centralizerm}, we see that 
\begin{equation}
\label{fiberandN}
\left|\Theta^{-1}(f)\right| = \frac{N(\alpha, m,n;q)}{q^{mn}-1} \quad \text{for any $f\in \imn$ and any root $\alpha\in \Fqmn$ of $f$.}
\end{equation}
In particular, $N(\alpha, m,n;q)$ is unchanged if $\alpha$ is replaced by any of its conjugates with respect to the field extension $\Fqmn/\Fq$.

The relation between splitting subspaces of $\Fqmn$ of dimension $m$ and ordered bases of the form $\bavm$ should be quite clear by now. For ease of reference, this is stated below and we remark that this is just a paraphrasing of \cite[Lem. 3]{N2}. 

\begin{lemma}
\label{splitandbases}
Let $\alpha\in \Fqmn$ be such that $\Fqmn = \Fq(\alpha)$, and let $v_1, \dots , v_m\in \Fqmn$. Also let $W$ denote the $\Fq$-linear subspace 
of $\Fqmn$ spanned by $v_1, \dots , v_m$. Then $\bavm$ is an ordered basis of $\Fqmn$ if and only if $W$ is an $m$-dimensional splitting subspace of $\Fqmn$. 
\end{lemma}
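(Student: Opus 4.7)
The plan is to unpack both sides of the biconditional in terms of the chain of subspaces $W, \alpha W, \dots, \alpha^{n-1} W$, exploiting the fact that since $\Fqmn = \Fq(\alpha)$ forces $\alpha \neq 0$ (for $mn \geq 1$), multiplication by $\alpha^i$ is an $\Fq$-linear automorphism of $\Fqmn$. In particular $L_{\alpha^i} = L_\alpha^i$ is a bijection, so $\dim_{\Fq}(\alpha^i U) = \dim_{\Fq} U$ for every $\Fq$-subspace $U \subseteq \Fqmn$, and every spanning set of $U$ is sent by $L_{\alpha^i}$ to a spanning set of $\alpha^i U$. These two observations are the only structural ingredients needed.

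For the forward direction, assume $\bavm$ is an ordered basis of $\Fqmn$. Since (by the paper's convention) $\bavm$ consists of $mn$ distinct elements and these are linearly independent, the sublist $v_1, \dots, v_m$ is linearly independent, so $\dim W = m$; similarly each block $\alpha^i v_1, \dots, \alpha^i v_m$ is a basis of the $m$-dimensional subspace $\alpha^i W$. Linear independence of the full concatenation over $i = 0, \dots, n-1$ then says that the sum $W + \alpha W + \cdots + \alpha^{n-1} W$ is a direct sum, and since this concatenation also spans $\Fqmn$, the sum equals $\Fqmn$; hence $W$ is an $m$-dimensional $\alpha$-splitting subspace.

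Conversely, suppose $W$ has dimension $m$ and $\Fqmn = W \oplus \alpha W \oplus \cdots \oplus \alpha^{n-1} W$. Then $v_1, \dots, v_m$ span the $m$-dimensional space $W$, hence form a basis of $W$; applying the bijection $L_{\alpha^i}$ gives a basis $\alpha^i v_1, \dots, \alpha^i v_m$ of $\alpha^i W$. Concatenating these bases across $i = 0, 1, \dots, n-1$ and invoking the direct sum decomposition produces an ordered basis of $\Fqmn$, which is precisely $\bavm$. The whole argument is essentially bookkeeping, so there is no genuine obstacle; the only point that requires a moment's attention is noting that in the forward direction the spanning set $v_1, \dots, v_m$ is forced to be an honest basis of $W$ rather than merely a spanning family. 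This matches the authors' remark that the statement is just a paraphrasing of \cite[Lem. 3]{N2}.
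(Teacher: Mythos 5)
Your proof is correct; the paper's own proof of this lemma is simply the word ``Straightforward,'' and your argument is exactly the routine unpacking the authors have in mind (blocks $\alpha^i v_1,\dots,\alpha^i v_m$ spanning $\alpha^i W$, directness of the sum from linear independence of the concatenation, and injectivity of multiplication by $\alpha^i$ in the converse). Nothing further is needed.
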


\begin{proof}
Straightforward. 
\end{proof}

\begin{corollary}
\label{SandN}
Let $\alpha\in \Fqmn$ be such that $\Fqmn = \Fq(\alpha)$. Then 
$$
S(\alpha, m,n;q) = \frac{N(\alpha, m,n;q)}{\left|\GL_m(\Fq)\right|}, \quad \text{that is,} \quad N(\alpha, m,n;q) = S(\alpha, m,n;q) \prod_{i=0}^{m-1}(q^m-q^i).
$$
\end{corollary}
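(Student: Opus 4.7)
The plan is to set up a surjection from ordered tuples $(v_1,\dots,v_m)\in\Fqmn^m$ with $\bavm$ a basis of $\Fqmn$ onto the set of $m$-dimensional $\sigma$-splitting subspaces (with $\sigma=\alpha$), and compute the uniform fiber size.

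First I would define the map
$$
\Pi:\left\{(v_1,\dots,v_m)\in\Fqmn^m : \bavm\text{ is an ordered basis of }\Fqmn\right\} \longrightarrow \left\{W\subseteq\Fqmn : W\text{ is an $\alpha$-splitting subspace of dimension } m\right\}
$$
by $\Pi(v_1,\dots,v_m) := \operatorname{span}_{\Fq}(v_1,\dots,v_m)$. Lemma \ref{splitandbases} guarantees that $\Pi$ lands in the target set, and that every $m$-dimensional $\alpha$-splitting subspace $W$ lies in the image: choose any ordered $\Fq$-basis $(v_1,\dots,v_m)$ of $W$, and the lemma forces $\bavm$ to be a basis of $\Fqmn$.

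Next, I would fix an $m$-dimensional $\alpha$-splitting subspace $W$ and identify the fiber $\Pi^{-1}(W)$. If $\bavm$ is a basis of $\Fqmn$, then in particular $v_1,\dots,v_m$ are $\Fq$-linearly independent, hence an ordered basis of the $m$-dimensional space $W=\operatorname{span}(v_1,\dots,v_m)$. Conversely, every ordered basis $(v_1,\dots,v_m)$ of $W$ gives an element of $\Pi^{-1}(W)$ by Lemma \ref{splitandbases}. Thus $\Pi^{-1}(W)$ is in bijection with the set of ordered $\Fq$-bases of $W$, whose cardinality is the well-known quantity $|\GL_m(\Fq)|=\prod_{i=0}^{m-1}(q^m-q^i)$.

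Since every fiber has the same size, summing over the base yields
$$
N(\alpha,m,n;q) = S(\alpha,m,n;q)\cdot |\GL_m(\Fq)| = S(\alpha,m,n;q)\prod_{i=0}^{m-1}(q^m-q^i),
$$
which is the claimed identity. There is no real obstacle here: the content is entirely in Lemma \ref{splitandbases}, and the remaining step is the standard count of ordered bases of a finite-dimensional vector space over $\Fq$.
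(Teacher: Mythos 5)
Your argument is correct and is exactly the paper's proof, merely written out in more detail: the paper also deduces the identity from Lemma \ref{splitandbases} together with the count $\left|\GL_m(\Fq)\right| = \prod_{i=0}^{m-1}(q^m-q^i)$ of ordered bases of an $m$-dimensional $\Fq$-vector space. No issues.
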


\begin{proof}
Follows from Lemma \ref{splitandbases} and the fact that the number of distinct ordered bases of an $m$-dimensional vector space over $\Fq$ is 
$\left|\GL_m(\Fq)\right| = \prod_{i=0}^{m-1}(q^m-q^i)$. 
\end{proof}

In view of \eqref{fiberandN} and Corollary \ref{SandN}, we can formulate the following quantitative formulation of (a slightly more general version of) Niederreiter's problem. 

\begin{conjecture}[Splitting Subspace Conjecture]
\label{SSC}
Let $\alpha\in \Fqmn$ be such that $\Fqmn = \Fq(\alpha)$. Then 
$$
S(\alpha, m,n;q) = \displaystyle \frac{q^{mn}-1}{q^m-1} \, q^{m(m-1)(n-1)} .
$$
\end{conjecture}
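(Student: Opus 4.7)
My plan is to reduce Conjecture \ref{SSC} to the Irreducible Fiber Conjecture (Conjecture \ref{FibConj2}) and then to tackle the latter by generalising the proof of Theorem \ref{mtwoconj} from $m=2$ to arbitrary $m$. By Corollary \ref{SandN} together with \eqref{fiberandN},
$$
|\Theta^{-1}(f)|\;=\;\frac{S(\alpha,m,n;q)}{q^{mn}-1}\prod_{i=0}^{m-1}(q^m-q^i)\quad\text{for any }f\in\imn\text{ and any root }\alpha\text{ of }f,
$$
so substituting the conjectural value of $S(\alpha,m,n;q)$ collapses the right side to $q^{m(m-1)(n-1)}\prod_{i=1}^{m-1}(q^m-q^i)$, which is exactly Conjecture \ref{FibConj2}. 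Equivalently, it suffices to show that
$$
N(\alpha,m,n;q)\;=\;(q^{mn}-1)\,q^{m(m-1)(n-1)}\prod_{i=1}^{m-1}(q^m-q^i).
$$

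\textbf{Counting plan.} In the spirit of Lemma \ref{nobases}, I would build the tuple $(v_1,\dots,v_m)$ one coordinate at a time. Pick $v_1\in\Fqmn^{*}$ in $q^{mn}-1$ ways, and inductively suppose that $v_1,\dots,v_{k-1}$ have already been chosen so that $U_{k-1}:=\mathrm{span}_{\Fq}\{\alpha^i v_j : 0\le i<n,\ 1\le j\le k-1\}$ has the full dimension $(k-1)n$. The next entry $v_k\in\Fqmn$ is admissible precisely when $v_k,\alpha v_k,\dots,\alpha^{n-1}v_k$ are linearly independent modulo $U_{k-1}$; that is, exactly when $v_k$ lies outside the forbidden set
$$
F_{k-1}\;:=\;\bigl\{u\in\Fqmn \; : \; p(\alpha)u\in U_{k-1}\text{ for some nonzero }p\in\Fq[X]\text{ with }\deg p<n\bigr\}.
$$
Comparing the target formula for $N(\alpha,m,n;q)$ with the $m=2$ case (where the argument of Lemma \ref{nobases} essentially yields $|F_1|=q^{2n-1}$) suggests the refined claim
$$
|F_{k-1}|\;=\;q^{m(n-1)+k-1}\qquad(2\le k\le m),
$$
independently of the particular admissible choice of $v_1,\dots,v_{k-1}$. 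Granting this, the count of valid $v_k$ is $q^{mn}-q^{m(n-1)+k-1}=q^{m(n-1)}(q^m-q^{k-1})$, and the product $(q^{mn}-1)\prod_{k=2}^m q^{m(n-1)}(q^m-q^{k-1})$ is exactly the predicted value of $N(\alpha,m,n;q)$.

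\textbf{Main obstacle.} The crux is the refined claim on $|F_{k-1}|$. For $k=2$, $U_1$ is (after rescaling by $v_1$) the $\Fq$-span of $1,\alpha,\dots,\alpha^{n-1}$, so $F_1$ is the set of ratios $g(\alpha)/p(\alpha)$ of polynomials of degree $<n$, and Corollary \ref{corbb} settles the count exactly as in Lemma \ref{nobases}. For $k\ge 3$, however, $U_{k-1}$ is no longer stable under multiplication by $\Fq[\alpha]=\Fqmn$ and genuinely depends on the earlier $v_j$, so the clean translation to coprime-polynomial counting breaks down. Two natural strategies I would try are (a) an inclusion-exclusion over the family $\{p(\alpha)^{-1}U_{k-1}\}_{p}$, organised by the $\Fq$-subalgebra $\{c\in\Fqmn : cU_{k-1}\subseteq U_{k-1}\}$, together with a Benjamin-Bennett-style refinement handling $m$-tuples of polynomials of bounded degree with partial coprimality; and (b) exhibiting a natural group action transitive on admissible $(v_1,\dots,v_{k-1})$ producing a fixed $U_{k-1}$, thereby reducing $|F_{k-1}|$ to the case of a canonical $U_{k-1}$. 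Either route appears to demand new combinatorial input, and this is the expected main obstacle; indeed, carrying it through in full generality would simultaneously settle Niederreiter's open problem along with Conjectures \ref{conj0}, \ref{FibConj1}, and \ref{FibConj2}.
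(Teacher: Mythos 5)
The statement you are asked to prove is an open conjecture in this paper: the authors do not prove it, they only (a) establish its equivalence with the Irreducible Fiber Conjecture \ref{FibConj2} via \eqref{fiberandN} and Corollary \ref{SandN}, (b) verify the case $m=2$ (Theorem \ref{mtwoconj}) and the trivial cases $m=1$, $n=1$, and (c) propose the further refinement in Conjecture \ref{PSSC}. Your opening reduction reproduces (a) exactly and your arithmetic is correct: substituting the conjectural value of $S(\alpha,m,n;q)$ into $\left|\Theta^{-1}(f)\right| = S(\alpha,m,n;q)\prod_{i=0}^{m-1}(q^m-q^i)/(q^{mn}-1)$ does collapse to the fiber formula, and your target $N(\alpha,m,n;q)=(q^{mn}-1)\,q^{m(m-1)(n-1)}\prod_{i=1}^{m-1}(q^m-q^i)$ matches $(q^{mn}-1)\prod_{k=2}^{m}q^{m(n-1)}(q^m-q^{k-1})$. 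Your $k=2$ step is likewise a faithful reading of Lemma \ref{nobases} and Corollary \ref{corbb}, where $|F_1|=q^{2n-1}$ when $m=2$.

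The genuine gap is the one you yourself flag: the claim $|F_{k-1}|=q^{m(n-1)+k-1}$ for $3\le k\le m$, \emph{together with its independence of the admissible choice of} $v_1,\dots,v_{k-1}$, is not a lemma you can defer --- it \emph{is} the conjecture. Multiplying the resulting factors over $k$ gives precisely the conjectural value of $N(\alpha,m,n;q)$, so establishing that claim for all $k$ is logically equivalent to proving Conjecture \ref{SSC}. The mechanism that makes $k=2$ work is that $U_1=v_1\cdot\mathrm{span}_{\Fq}\{1,\alpha,\dots,\alpha^{n-1}\}$ is a rescaled truncation of the cyclic $\Fq[\alpha]$-structure, which converts membership in $F_1$ into a statement about ratios $g(\alpha)/p(\alpha)$ of low-degree polynomials and hence into the coprime-pair count of Corollary \ref{corbb}; for $k\ge 3$ the subspace $U_{k-1}$ has no such description and genuinely depends on the earlier choices, so neither of your proposed strategies (inclusion--exclusion over $\{p(\alpha)^{-1}U_{k-1}\}$, or a transitive group action on admissible prefixes) is supplied with any actual construction. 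Note also that the paper's own suggested route is different from yours: rather than peeling off one vector at a time, it fixes a base point and conjectures $\left|\mathfrak{S}_{\alpha}^x\right|=q^{m(m-1)(n-1)}$ (Conjecture \ref{PSSC}), hoping for a bijection with $(n-1)$-tuples of nilpotent $m\times m$ matrices via the Fine--Herstein count. In short: the reduction is correct and already in the paper, but no proof of the statement is given, and none exists in the paper.
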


The above discussion makes it clear that Irreducible Fiber Conjecture (\ref{FibConj2}) 
and the Splitting Subspace Conjecture (\ref{SSC}) are equivalent to each other. In particular, Theorem \ref{mtwoconj} implies that the Splitting Subspace Conjecture holds in the affirmative when $m=2$. It may also be noted that the Splitting Subspace Conjecture is trivially valid when either $m=1$ or $n=1$, and thus this equivalent formulation of a more general version of the Primitive Fiber Conjecture (\ref{FibConj1}) subsumes \cite[Thm. 7.1]{GSM}.

In the remainder of this section, we will use some elementary observations to formulate a refined version of the Splitting Subspace Conjecture that seems particularly amenable to tackle. Let us first make some definitions. For $\alpha\in \Fqmn$, let $\mathfrak{S}_{\alpha}$ denote the set of 
all $m$-dimensional $\alpha$-splitting subspaces of $\Fqmn$. 
By a \emph{pointed $\alpha$-splitting subspace} of dimension $m$ we shall mean a pair $(W,x)$ where $W \in \mathfrak{S}_{\alpha}$ 
and $x\in W$. The element $x$ may be referred to as the \emph{base point} of $(W,x)$. Given any $x\in \Fqmn$, we let
$
\mathfrak{S}_{\alpha}^x:=\left\{W\in \mathfrak{S}_{\alpha}: x\in W\right\}.
$

\begin{proposition}
\label{elemsplitprop}
Let $\alpha\in \Fqmn$ be such that $\Fqmn = \Fq(\alpha)$. Then: 
\begin{enumerate}
	\item[{\rm (i)}] $\mathfrak{S}_{\alpha}$ is nonempty. Also, if $W\in \mathfrak{S}_{\alpha}$ and $\beta\in \Fqmn^*$, then $\beta W\in \mathfrak{S}_{\alpha}$.
\item[{\rm (ii)}]
$\mathfrak{S}_{\alpha}^x$ is nonempty for any $x\in \Fqmn^*$.
	\item[{\rm (iii)}]
$\left|\mathfrak{S}_{\alpha}^x\right| =\left|\mathfrak{S}_{\alpha}^y\right|$ for any $x,y\in \Fqmn^*$.
\item[{\rm (iv)}]
$\left|\mathfrak{S}_{\alpha}\right| = \left|\mathfrak{S}_{\alpha}^x\right|{(q^{mn}-1)}/{(q^m-1)}$ for any $x\in \Fqmn^*$.
\end{enumerate}
\end{proposition}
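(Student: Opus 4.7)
For part (i), the plan is to exhibit an explicit element of $\mathfrak{S}_{\alpha}$. Since $\Fqmn = \Fq(\alpha)$, the set $\{1,\alpha,\dots,\alpha^{mn-1}\}$ is an $\Fq$-basis of $\Fqmn$, so taking $W_0 := \operatorname{span}_{\Fq}\{1,\alpha,\dots,\alpha^{m-1}\}$ gives $\alpha^i W_0 = \operatorname{span}_{\Fq}\{\alpha^{im},\dots,\alpha^{im+m-1}\}$, and the direct sum decomposition $\Fqmn = W_0\oplus \alpha W_0\oplus\cdots\oplus\alpha^{n-1}W_0$ follows at once. For the closure under scalar multiplication, I would observe that for $\beta\in \Fqmn^*$ multiplication by $\beta$ is an $\Fq$-linear automorphism of $\Fqmn$ commuting with multiplication by $\alpha^i$; hence $\beta W + \beta\alpha W + \cdots + \beta\alpha^{n-1}W = \beta\,\Fqmn = \Fqmn$, and the sum remains direct because the summands have the correct dimensions.

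Part (ii) is then immediate: given $x\in \Fqmn^*$, pick any $W\in \mathfrak{S}_{\alpha}$ (available by (i)) and choose a nonzero $y\in W$; then $\beta := x y^{-1}\in \Fqmn^*$ satisfies $x = \beta y\in \beta W$, and $\beta W\in \mathfrak{S}_{\alpha}$ by the second assertion of (i). For part (iii), I would exhibit an explicit bijection $\mathfrak{S}_{\alpha}^x \to \mathfrak{S}_{\alpha}^y$ by $W\mapsto (yx^{-1})W$; this lands in $\mathfrak{S}_{\alpha}^y$ by (i), and the evidently inverse map $W\mapsto (xy^{-1})W$ shows it is a bijection.

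Part (iv) will be a standard double-counting argument applied to the set of pointed $\alpha$-splitting subspaces with nonzero base point, namely
\[
\mathcal{P} := \{(W,x) : W\in \mathfrak{S}_{\alpha},\ x\in W\setminus\{0\}\}.
\]
Counting $\mathcal{P}$ by first choosing $W$ and then $x\in W\setminus\{0\}$ gives $|\mathcal{P}| = |\mathfrak{S}_{\alpha}|\,(q^m-1)$. Counting by first choosing $x\in \Fqmn^*$ (which is legitimate thanks to (ii)) and then $W\in \mathfrak{S}_{\alpha}^x$ gives $|\mathcal{P}| = \sum_{x\in \Fqmn^*}|\mathfrak{S}_{\alpha}^x| = (q^{mn}-1)|\mathfrak{S}_{\alpha}^x|$, where the last equality uses (iii) to pull out the common value. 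Equating the two expressions yields the stated identity. None of the four parts poses a real obstacle; the only point requiring a little care is ensuring the $\Fq$-linear automorphism argument in (i) and the invariance in (iii) genuinely respect the $\alpha$-splitting property, which is automatic since scalar multiplication by elements of $\Fqmn$ commutes with multiplication by $\alpha$.
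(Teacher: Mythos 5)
Your parts (ii), (iii) and (iv) are correct and coincide with the paper's argument (scaling by $xy^{-1}$ for the bijection, and double counting of pointed splitting subspaces). But part (i) contains a genuine error in the choice of explicit example. You set $W_0:=\operatorname{span}_{\Fq}\{1,\alpha,\dots,\alpha^{m-1}\}$ and claim that $\alpha^i W_0=\operatorname{span}_{\Fq}\{\alpha^{im},\dots,\alpha^{im+m-1}\}$. This is false: multiplying the spanning set $\{1,\alpha,\dots,\alpha^{m-1}\}$ by $\alpha^i$ gives $\alpha^i W_0=\operatorname{span}_{\Fq}\{\alpha^{i},\alpha^{i+1},\dots,\alpha^{i+m-1}\}$, so for $m\ge 2$ and $n\ge 2$ the consecutive translates overlap, e.g.\ $W_0\cap\alpha W_0\supseteq\operatorname{span}_{\Fq}\{\alpha,\dots,\alpha^{m-1}\}\ne 0$, and the sum $W_0+\alpha W_0+\cdots+\alpha^{n-1}W_0$ is not direct (indeed it only spans $\operatorname{span}_{\Fq}\{1,\dots,\alpha^{m+n-2}\}$, which is a proper subspace once $m+n-1<mn$). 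What your computation actually shows is that $W_0$ is $\alpha^m$-splitting, not $\alpha$-splitting: one has $\alpha^{im}W_0=\operatorname{span}_{\Fq}\{\alpha^{im},\dots,\alpha^{im+m-1}\}$, with the exponent $im$ in place of $i$.

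The correct witness, which is the one the paper uses, is $U:=\operatorname{span}_{\Fq}\{\alpha^{in}:0\le i\le m-1\}$, i.e.\ powers spaced by $n$ rather than consecutive powers. Then $\alpha^{j}U=\operatorname{span}_{\Fq}\{\alpha^{in+j}:0\le i\le m-1\}$ for $0\le j\le n-1$, and since every integer $k$ with $0\le k\le mn-1$ is uniquely of the form $in+j$ with $0\le i\le m-1$ and $0\le j\le n-1$, the basis $\{1,\alpha,\dots,\alpha^{mn-1}\}$ of ${\mathbb F}_{q^{mn}}$ is partitioned among $U,\alpha U,\dots,\alpha^{n-1}U$, giving the required direct sum. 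With this repair the rest of your argument goes through: your closure statement $\beta W\in\mathfrak{S}_{\alpha}$ (scalar multiplication commutes with multiplication by $\alpha$, the sum is all of ${\mathbb F}_{q^{mn}}$, and directness follows by dimension count) is fine, your derivation of (ii) from (i) is a harmless variant of the paper's direct observation that $xU\in\mathfrak{S}_{\alpha}^{x}$, and (iii) and (iv) are exactly the paper's proofs.
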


\begin{proof}
(i) Let 
$U$ be the $\Fq$-linear span of $\left\{\alpha^{in}: 0\le i\le m-1\right\}$. 
Then  $U\in \mathfrak{S}_{\alpha}$. Also, if $W\in \mathfrak{S}_{\alpha}$ and $\beta\in \Fqmn^*$, then $\beta W\in \mathfrak{S}_{\alpha}$ since
$\alpha^j\beta = \beta\alpha^j$ for $0\le j\le n-1$. 

(ii) 
If $U$ is as in (i), then 
$xU\in \mathfrak{S}_{\alpha}^x$ for any $x\in \Fqmn^*$. 

(iii) If $x,y\in \Fqmn^*$ and $\beta=y/x$, then $W\mapsto \beta W$ gives a bijection of $ \mathfrak{S}_{\alpha}^x$ onto $ \mathfrak{S}_{\alpha}^y$.

(iv) Counting the set $\left\{(W,x): W\in  \mathfrak{S}_{\alpha} \text{ and } x\in W\text{ with $x\ne 0$}\right\}$ 
of all pointed $\alpha$-splitting subspaces with a nonzero base point in two different ways, we 
find $\left|\mathfrak{S}_{\alpha}\right| (q^m-1) = \left|\mathfrak{S}_{\alpha}^x\right|{(q^{mn}-1)}$ for any $x\in \Fqmn^*$. 
\end{proof}

In view of parts (iii) and (iv) of Proposition \ref{elemsplitprop}, we can formulate the following refined version of the Splitting Subspace Conjecture.  

\begin{conjecture}[Pointed Splitting Subspace Conjecture]
\label{PSSC}
Let $\alpha\in \Fqmn$ be such that $\Fqmn = \Fq(\alpha)$ and let $x\in \Fqmn^*$. Then the number of $m$-dimensional pointed $\alpha$-splitting subspaces  of $\Fqmn$ with base point $x$ is equal to $q^{m(m-1)(n-1)}$. 
\end{conjecture}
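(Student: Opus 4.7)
\medskip
\noindent\emph{Proof plan.} By part~(iii) of Proposition~\ref{elemsplitprop}, we may take $x = 1$, so the goal reduces to showing $|\mathfrak{S}_{\alpha}^{1}| = q^{m(m-1)(n-1)}$. Each $W \in \mathfrak{S}_{\alpha}^{1}$ contains $1$ and admits exactly $\prod_{i=1}^{m-1}(q^{m}-q^{i})$ ordered bases of the form $(1,v_{2},\ldots,v_{m})$, so by Lemma~\ref{splitandbases} the conjecture is equivalent to the assertion that the number of $(m-1)$-tuples $(v_{2},\ldots,v_{m}) \in \Fqmn^{m-1}$ for which $\bavm$ (taking $v_{1}:=1$) is an $\Fq$-basis of $\Fqmn$ equals $q^{m(m-1)(n-1)}\prod_{i=1}^{m-1}(q^{m}-q^{i})$. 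This is the natural pointed analog of Lemma~\ref{nobases} with $v_{1}$ fixed, which sidesteps both the factor $(q^{mn}-1)$ coming from the free first vector and the centralizer computation of Lemma~\ref{centralizerm}.

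The next step is to convert the basis condition into polynomial data. Identify $\Fqmn \cong \Fq[X]/(f(X))$ via $\alpha \leftrightarrow X$ and write $v_{i} = p_{i}(\alpha)$ with $p_{i} \in \Fq[X]$ of degree less than $mn$ and $p_{1}=1$. A nontrivial $\Fq$-linear dependence among the residues $\{X^{j}p_{i} \bmod f : 0 \le j < n,\, 1 \le i \le m\}$ corresponds to a nonzero $(Q_{1},\ldots,Q_{m}) \in \Fq[X]^{m}$ with $\deg Q_{i} < n$ such that $Q_{1} + Q_{2}p_{2} + \cdots + Q_{m}p_{m} \equiv 0 \pmod{f}$. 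For $m = 2$ this collapses to $v_{2} = -Q_{1}(\alpha)/Q_{2}(\alpha)$, placing $v_{2}$ in $\Sigma_{\alpha}$, and Corollary~\ref{corbb} then delivers PSSC for $m=2$ without invoking Lemma~\ref{nobases}. For general $m$, the plan is to stratify the failing tuples $(v_{2},\ldots,v_{m})$ by the corank of the associated $mn \times mn$ coefficient matrix and carry out an inclusion--exclusion over the possible bad tuples $(Q_{2},\ldots,Q_{m})$ via a multi-variable extension of Corollary~\ref{corbb}.

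The main obstacle is this inclusion--exclusion. For $m \ge 3$, the bad locus is a union of affine subspaces of $\Fqmn^{m-1}$ whose intersection pattern is governed by $\Fq$-linear dependencies among the $Q_{i}$ modulo $f$, and the resulting alternating sum does not evidently collapse to closed form. What one really needs is a multi-variable analog of Proposition~\ref{bb} counting tuples of polynomials of bounded degree whose joint generation pattern modulo $f$ has prescribed rank---an enumerative question without a known clean answer. An alternative induction on $m$, fixing $(v_{2},\ldots,v_{m-1})$ so that $\{\alpha^{j}v_{i} : 0 \le j < n,\, 1 \le i \le m-1\}$ is $\Fq$-linearly independent and then counting admissible $v_{m}$, runs into difficulty because the $\alpha$-action does not descend to the quotient of $\Fqmn$ by the $\Fq$-span of these $(m-1)n$ vectors, so no clean recursion for $|\mathfrak{S}_{\alpha}^{1}|$ emerges. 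Overcoming this---either by sharper control of the inclusion--exclusion or by constructing a direct bijection of $\mathfrak{S}_{\alpha}^{1}$ with some natural set of cardinality $q^{m(m-1)(n-1)}$---appears to be the essential new ingredient required.
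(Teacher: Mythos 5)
This statement is presented in the paper as a \emph{conjecture} (the Pointed Splitting Subspace Conjecture), and the paper contains no proof of it; for $m\ge 3$ it is open. Your proposal is consistent with that status: the reductions you make are correct, but you do not prove the general case, and you say so explicitly. In detail: the reduction to $x=1$ via Proposition \ref{elemsplitprop}(iii) is valid; the count of $\prod_{i=1}^{m-1}(q^m-q^i)$ ordered bases of a fixed $W\in\mathfrak{S}_{\alpha}^{1}$ beginning with $1$ is correct; and the resulting equivalence of the conjecture with counting tuples $(v_2,\dots,v_m)$ for which $\bavm$ (with $v_1=1$) is an $\Fq$-basis is precisely the pointed analogue of the paper's own chain Lemma \ref{splitandbases} -- Corollary \ref{SandN} -- equation \eqref{fiberandN}. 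Your $m=2$ computation is, with $v_1$ specialized to $1$, exactly the argument in the proof of Lemma \ref{nobases} (reduction to $\beta=v_2/v_1$, identification of the bad set with $\Sigma_{\alpha}$, and Corollary \ref{corbb}); combined with Proposition \ref{elemsplitprop}(iv) it recovers the one nontrivial case the paper actually settles, namely $m=2$ via Theorem \ref{mtwoconj}.

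The genuine gap is the one you name yourself: for $m\ge 3$ the locus of non-bases is the union of the subspaces $\ker\phi_{\mathbf{c}}$ over $\mathbf{c}\in\PP(\Fq^{mn})$, and while the paper exploits this union to obtain the upper and lower bounds of Lemma \ref{boundsforN} (hence the asymptotic statement of Theorem \ref{asymptformula}), an exact count requires controlling all the overlaps, i.e.\ the multi-variable coprimality/rank statistics you describe, for which no closed-form analogue of Proposition \ref{bb} is known. Your two fallback strategies (stratified inclusion--exclusion; induction on $m$) both stall for the reasons you give. For what it is worth, the paper's own suggested route to closing this gap is different from either: it proposes constructing a bijection between $\mathfrak{S}_{\alpha}^{x}$ and the set of $(n-1)$-tuples of nilpotent $m\times m$ matrices over $\Fq$, whose cardinality is $q^{m(m-1)(n-1)}$ by the Fine--Herstein theorem \cite{FH}. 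So: nothing in your proposal is wrong, but it does not constitute a proof, and none exists in the paper.
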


It should be clear that the Pointed Splitting Subspace Conjecture implies all of the conjectures stated earlier, and also that the former is completely trivial when either $m=1$ or $n=1$. It may also be noted that part (i) of Proposition \ref{elemsplitprop} implies Proposition
\ref{surjtheta}. Finally, we remark that $q^{m(m-1)}$ is the number of nilpotent $m\times m$ matrices over $\Fq$, thanks to an old result of Fine and Herstein \cite{FH}, and thus a particularly nice way to prove 
the Pointed Splitting Subspace Conjecture could be to set up a natural bijection between $\mathfrak{S}_{\alpha}^x$ and the set of $(n-1)$-tuples (or if one prefers, pointed $n$-tuples) of nilpotent $m\times m$ matrices over $\Fq$.

\section{Asymptotic Formula} 
\label{sec:asymp}

The Irreducible Fiber Conjecture (\ref{FibConj2}) states that for any $f\in \imn$, the cardinality of $\Theta^{-1}(f)$ is  
$q^{m(m-1)(n-1)}  \prod_{i=1}^{m-1}(q^m-q^i)$. This expression is clearly a polynomial in $q$ of degree $mn(m-1)$. Even though the conjecture remains open, in general, we will show that asymptotically the size of each irreducible fiber is like $q^{mn(m-1)}$. 
To this end, we use \eqref{fiberandN}, and obtain suitable lower and upper bounds for 
$N(\alpha, m,n;q)$ by adapting an argument in the proof of \cite[Thm. 5]{N2}. 

\begin{lemma}
\label{boundsforN}
Let $\alpha\in \Fqmn$ be such that $\Fqmn = \Fq(\alpha)$. Then 
$$
 \frac{(q-2)q^{mn}+1}{(q-1)} \, q^{mn(m-1)} \le N(\alpha, m,n;q) \le \prod_{i=0}^{m-1}(q^{mn}-q^i) .
$$
\end{lemma}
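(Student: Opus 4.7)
The plan is to count $N(\alpha,m,n;q)$ by parametrizing ordered bases of the form $\bavm$ directly by tuples $(v_1,\ldots,v_m) \in \Fqmn^m$ and rewriting the linear-independence condition algebraically. Specifically, $\bavm$ is $\Fq$-linearly independent if and only if, for every nonzero $m$-tuple $\mathbf{p}=(p_1,\ldots,p_m)$ of polynomials in $\Fq[X]$ each of degree less than $n$, one has $\sum_{i=1}^m p_i(\alpha)\,v_i \ne 0$. Writing $H_{\mathbf p} := \{(v_1,\ldots,v_m)\in\Fqmn^m : \sum_i p_i(\alpha)v_i = 0\}$ and setting $\mathrm{Bad} = \bigcup_{\mathbf p \ne \mathbf 0} H_{\mathbf p}$, we have $N(\alpha,m,n;q) = q^{m^2 n} - |\mathrm{Bad}|$, and the task reduces to bounding $|\mathrm{Bad}|$.

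The upper bound is immediate: if $\bavm$ is linearly independent, then in particular its first $m$ entries $v_1,\ldots,v_m$ are linearly independent over $\Fq$, so $N(\alpha,m,n;q)$ is bounded above by the number of ordered linearly independent $m$-tuples in $\Fqmn$, namely $\prod_{i=0}^{m-1}(q^{mn}-q^i)$.

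For the lower bound, I would observe that $H_{\mathbf p}$ is the kernel of the $\Fq$-linear map $(v_i) \mapsto \sum_i p_i(\alpha)v_i$ from $\Fqmn^m$ to $\Fqmn$. Because each $\deg p_i < n < mn$ and $\alpha$ has minimal polynomial of degree $mn$, at least one $p_i(\alpha)$ is a nonzero, hence invertible, element of the field $\Fqmn$, making the map surjective. Therefore $|H_{\mathbf p}| = q^{mn(m-1)}$. The key refinement is that $H_{c\mathbf p} = H_{\mathbf p}$ for every $c\in \Fq^*$, so the union defining $\mathrm{Bad}$ is really indexed by the projectivization of the $\Fq$-vector space of such tuples, which has $(q^{mn}-1)/(q-1)$ elements. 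A plain union bound then gives
\[
|\mathrm{Bad}| \;\le\; \frac{q^{mn}-1}{q-1}\cdot q^{mn(m-1)},
\]
and subtracting from $q^{m^2 n}$ and simplifying yields $\frac{(q-2)q^{mn}+1}{q-1}\, q^{mn(m-1)}$, as claimed.

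The principal subtlety is the projective reduction: the factor $1/(q-1)$ in the target bound comes precisely from the fact that scaling $\mathbf p$ by an element of $\Fq^*$ does not alter the hyperplane $H_{\mathbf p}$. A naive union bound over all $q^{mn}-1$ nonzero tuples would only yield the trivial estimate $N \ge q^{mn(m-1)}$, which is far too weak. Beyond this observation, the argument is essentially a transcription of the counting technique in Niederreiter's [N2, Thm.~5] into the ordered-basis language of Section~\ref{sec:mtwo}, now applied to $N(\alpha,m,n;q)$ directly rather than to $S(\alpha,m,n;q)$.
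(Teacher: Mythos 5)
Your proposal is correct and is essentially the same argument as the paper's: the same upper bound via linear independence of $v_1,\dots,v_m$, and the same lower bound by covering the complement with the kernels $\ker\phi_{\mathbf c}$ (your $H_{\mathbf p}$), each of size $q^{mn(m-1)}$, indexed by $\PP(\Fq^{mn})$ after the same projective reduction. Your closing remark correctly identifies the source of the technique as the argument in the proof of Theorem~5 of Niederreiter's paper, which is exactly what the authors say they are adapting.
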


\begin{proof}
Let us write
$$
{\mathfrak{V}} = \left\{(v_1, \dots , v_m)\in \Fqmn^m : \bavm \text{ is an ordered $\Fq$-basis of } \Fqmn\right\}.
$$
Evidently, if $(v_1, \dots , v_m)\in {\mathfrak{V}}$, then $v_1, \dots , v_m$ are linearly independent. Hence 
$$
N(\alpha, m,n;q) = \left|{\mathfrak{V}} \right| \le \prod_{i=0}^{m-1}(q^{mn}-q^i).
$$
On the other hand, if $(v_1, \dots , v_m)\in \Fqmn^m \setminus {\mathfrak{V}}$, then there is a nonzero $mn$-tuple 
$$
\mathbf{c} = \left(c_{11}, \dots , c_{1n}, \dots , c_{m1}, \dots , c_{mn}\right) \in \Fq^{mn}
\; \text{ such that } \; \sum_{i=1}^m\sum_{j=1}^n c_{ij} v_i \alpha^{j-1} =0.
$$
In other words, $(v_1, \dots , v_m)$ is in the kernel 
of the linear map 
$\phi_{\mathbf{c}} : \Fqmn^m \to \Fqmn$ given by 
$$
\phi_{\mathbf{c}} (u_1, \dots , u_m) := \gamma_1u_1 + \dots + \gamma_m u_m, \quad \text{ where }\quad
\gamma_i:= \sum_{j=1}^n c_{ij} \alpha^{j-1} \text{ for $1\le i \le m$.}
$$
It is clear that if $\mathbf{c}$ is replaced by a proportional tuple $\lambda \mathbf{c}$, where $\lambda\in \Fq^*$, then $\ker \phi_{\mathbf{c}} = \ker \phi_{\lambda\mathbf{c}}$. Moreover, since $\mathbf{c}\ne \mathbf{0}$ and $\alpha$ is of degree $\ge n$ over $\Fq$,  not all $\gamma_1, \dots , \gamma_m$ are  zero, and therefore by the Rank-Nullity Theorem, $\ker \phi_{\mathbf{c}}$ is of dimension $m-1$ over $\Fqmn$. It follows that
$$
 \Fqmn^m \setminus {\mathfrak{V}} \subseteq \bigcup_{\mathbf{c}\in \PP(\Fq^{mn})} \ker \phi_{\mathbf{c}} \quad \text{ and } \quad
 \left| \Fqmn^m \setminus {\mathfrak{V}}\right| \le \frac{q^{mn} -1}{q-1} \, q^{mn(m-1)}.
$$
Consequently, 
$$
N(\alpha, m,n;q) \ge \left(q^{mn}\right)^m - \frac{q^{mn} -1}{q-1} \, q^{mn(m-1)} = \frac{(q-2)q^{mn}+1}{(q-1)} \, q^{mn(m-1)}.
$$
This completes the proof.
\end{proof}

%

\begin{theorem}
\label{asymptformula}
For any $f\in \imn$, the fiber cardinality $\left|\Theta^{-1}(f)\right|$ is asymptotically equivalent to $q^{mn(m-1)}$ as $q\to \infty$.
\end{theorem}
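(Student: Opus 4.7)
The plan is to squeeze $|\Theta^{-1}(f)|$ between two quantities that both behave like $q^{mn(m-1)}$ as $q\to\infty$, using nothing beyond equation \eqref{fiberandN} and Lemma \ref{boundsforN}. First, I would fix $f\in\imn$ and any root $\alpha\in\Fqmn$; since $f$ is irreducible of degree $mn$, automatically $\Fqmn=\Fq(\alpha)$, so Lemma \ref{boundsforN} applies. Dividing its two-sided bound on $N(\alpha,m,n;q)$ by $q^{mn}-1$, as dictated by \eqref{fiberandN}, and pulling the factor $q^{mn}-1$ out of $\prod_{i=0}^{m-1}(q^{mn}-q^i)$, one obtains
\begin{equation*}
\frac{(q-2)q^{mn}+1}{(q-1)(q^{mn}-1)}\,q^{mn(m-1)} \;\le\; |\Theta^{-1}(f)| \;\le\; \prod_{i=1}^{m-1}(q^{mn}-q^i).
\end{equation*}

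Next, I would divide each side by $q^{mn(m-1)}$ and check that both ratios tend to $1$. The upper ratio becomes $\prod_{i=1}^{m-1}\bigl(1-q^{i-mn}\bigr)$, understood as $1$ when $m=1$; every exponent $i-mn$ is strictly negative (since $1\le i\le m-1 < mn$), so each factor tends to $1$. The lower ratio $\bigl((q-2)q^{mn}+1\bigr)/\bigl((q-1)(q^{mn}-1)\bigr)$ is a quotient of two polynomials in $q$ of degree $mn+1$, each with leading coefficient $1$, so it also tends to $1$. The squeeze principle then yields $|\Theta^{-1}(f)|\sim q^{mn(m-1)}$, and the conclusion is uniform in $f$ since the bounds depend only on $q$, $m$, $n$.

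I do not foresee any real obstacle along this route: the substantive work already resides in Lemma \ref{boundsforN}, and what remains is an elementary limit calculation combined with the simple algebraic identity for pulling $q^{mn}-1$ out of the upper bound. The only point requiring a moment's thought is the degenerate case $m=1$, where the upper bound collapses to an empty product equal to $1$, in agreement with the exact equality $|\Theta^{-1}(f)|=1=q^{mn(m-1)}$ that reflects the fact that the ordinary companion matrix is the unique preimage of $f$ under $\Theta$.
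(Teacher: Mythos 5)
Your proposal is correct and follows essentially the same route as the paper: both squeeze $\left|\Theta^{-1}(f)\right|$ between the same two bounds obtained from \eqref{fiberandN} and Lemma \ref{boundsforN}, and then verify that each bound is asymptotic to $q^{mn(m-1)}$. The only cosmetic difference is that the paper replaces the lower bound by a monic polynomial minorant $L^*(q)$ of degree $mn(m-1)$, whereas you compute the limit of the ratio directly; the two calculations are equivalent.
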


\begin{proof}
Let $f\in \imn$ and let $\alpha\in \Fqmn$ be a root of $f$. 
From \eqref{fiberandN} and Lemma \ref{boundsforN}, we see that $L(q) \le \left|\Theta^{-1}(f)\right| \le U(q)$, where
$$
L(q):=   \frac{(q-2)q^{mn}+1}{(q-1)(q^{mn}-1)} \, q^{mn(m-1)} \quad \text{and} \quad U(q):= \prod_{i=1}^{m-1}(q^{mn}-q^i) .
$$
Further if we let $L^*(q):= \big((q-2)q^{mn} + 1 \big)q^{mn(m-2)-1}$, then $L^*(q)\le L(q)$ for $q>2$. Since both $L^*(q)$ and 
$U(q)$ are monic polynomials in $q$ of degree $mn(m-1)$, we obtain the desired result.
\end{proof}

It is clear that if $\alpha\in \Fqmn$ is such that $\Fqmn = \Fq(\alpha)$, then similar asymptotic formulae can be easily 
obtained for $N(\alpha, m,n;q)$ and $S(\alpha, m,n;q)$. 

\section{Application to Toeplitz matrices} 
\label{sec:toep}

Recall that a square matrix $A=\left(a_{ij}\right)$ is said to be a \emph{Toeplitz matrix} if $a_{ij} = a_{rs}$ whenever $i-j=r-s$. Thus every $n\times n$ Toeplitz matrix looks like
\begin{equation}
\label{top}
T_{\mathbf{c}} = \left(c_{n+i-j}\right)=\begin {pmatrix} 
c_n & \dots & c_2 & c_1 \\
c_{n+1} & \ddots &  & c_2 \\
\vdots & \ddots &  \ddots & \vdots \\
c_{2n-1} & \dots & c_{n+1} & c_n
\end{pmatrix} \quad \text{where} \quad 
\mathbf{c}= \left(c_1, c_2, \dots, c_{2n-1}\right).
\end{equation}
We denote by $\T_n(\Fq)$ the set of all Toeplitz matrices with entries in $\Fq$ and let $\TGL_n(\Fq):= \T_n(\Fq)\cap \GL_n(\Fq)$. 
It is clear that $\left|\T_n(\Fq)\right| = q^{2n-1}$. Determining $\left|\TGL_n(\Fq)\right|$ is far less obvious, but it is also given by a nice formula, namely,  
\begin{equation}
\label{TGLn}
\left|\TGL_n(\Fq)\right|  = q^{2n-1} - q^{2n-2} = q^{2n-1}\left(1 - \frac1q\right).
\end{equation}
A fairly involved proof of \eqref{TGLn} has recently been given by Kaltofen and Lobo \cite{KL} who also point out that Toeplitz matrices and the corresponding systems of equations are of much recent interest in symbolic computation. In fact, Toeplitz matrices are essentially equivalent to Hankel matrices and in this setting, \eqref{TGLn} was proved much earlier by Daykin \cite{D}. Here we will relate the determination of 
$\left|\TGL_n(\Fq)\right|$ to the results of Section \ref{sec:mtwo} and the existence of an irreducible trinomial (or binomial). 

\begin{proposition}
\label{TGLsize}
Let $q$ and $n$ be such that there exists an irreducible polynomial in $\Fq[X]$ of the form $X^{2n}-aX-b$, where $a,b\in \Fq$. Then 
$\left|\TGL_n(\Fq)\right|  = q^{2n-1} - q^{2n-2}$. 
\end{proposition}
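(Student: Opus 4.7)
The plan is to exploit the special form of the hypothesized irreducible $f(X) = X^{2n} - aX - b$ to identify an $n \times n$ Toeplitz matrix as a sub-block of a change-of-basis matrix, and then invoke Lemma \ref{nobases}. Fix a root $\alpha \in \Fqtn$ of $f$, so $\{1, \alpha, \alpha^2, \ldots, \alpha^{2n-1}\}$ is an $\Fq$-basis of $\Fqtn$ and every $\beta \in \Fqtn$ has a unique representation $\beta = h_0 + h_1 \alpha + \cdots + h_{2n-1}\alpha^{2n-1}$ with $h_k \in \Fq$.

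Given $\beta$, let $P(\beta)$ denote the $2n \times 2n$ matrix whose columns are the elements of $\B^{\alpha}_{(1,\beta)} = \{1, \beta, \alpha, \alpha\beta, \ldots, \alpha^{n-1}, \alpha^{n-1}\beta\}$, expressed in the standard basis $\{1, \alpha, \ldots, \alpha^{2n-1}\}$. After permuting the columns so that $1, \alpha, \ldots, \alpha^{n-1}$ come first and $\beta, \alpha\beta, \ldots, \alpha^{n-1}\beta$ come second, the resulting matrix is block upper triangular with $I_n$ as the top-left block and some $n \times n$ matrix $H(\beta)$ as the bottom-right block; explicitly, the $(i,j)$-entry of $H(\beta)$ (for $0 \le i, j \le n-1$) is the coefficient of $\alpha^{n+i}$ in $\alpha^j \beta$. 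Hence $\B^{\alpha}_{(1,\beta)}$ is a basis of $\Fqtn$ over $\Fq$ if and only if $H(\beta)$ is nonsingular.

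The key computation is that $H(\beta)$ is precisely a Toeplitz matrix, and this is exactly where the special shape of $f$ is used. Expanding $\alpha^j \beta = \sum_{k=0}^{2n-1} h_k \alpha^{j+k}$, terms with $j + k \geq 2n$ must be reduced using $\alpha^{2n} = a\alpha + b$, which gives $\alpha^{2n+\ell} = a\alpha^{\ell+1} + b\alpha^{\ell}$ with $\ell = j + k - 2n$. Since $j \leq n-1$ and $k \leq 2n-1$, we have $\ell \leq n-2$, so every reduced contribution lies in the span of $\{1, \alpha, \ldots, \alpha^{n-1}\}$ and contributes nothing to the coefficient of $\alpha^{n+i}$ for $0 \le i \le n-1$. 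Thus the $(i, j)$-entry of $H(\beta)$ equals $h_{n+i-j}$, giving exactly the Toeplitz matrix $T_{\mathbf{c}}$ of \eqref{top} with $\mathbf{c} = (h_1, h_2, \ldots, h_{2n-1})$. This verification is the main technical point: for a generic irreducible $f$ of degree $2n$, higher-order reductions would intrude into the bottom block and destroy the Toeplitz structure.

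To conclude, I would double-count. By the argument in the proof of Lemma \ref{nobases} (applied with $v_1 = 1$), the number of $\beta \in \Fqtn$ for which $\B^{\alpha}_{(1,\beta)}$ is an $\Fq$-basis of $\Fqtn$ equals $q^{2n-1}(q-1)$. On the other hand, the map $\beta \mapsto (h_1, \ldots, h_{2n-1})$ is exactly $q$-to-$1$ (the fibre is indexed by the free parameter $h_0 \in \Fq$), and $H(\beta)$ depends only on $(h_1, \ldots, h_{2n-1})$ since the index $n+i-j$ ranges over $\{1, 2, \ldots, 2n-1\}$. It follows that $q \cdot |\TGL_n(\Fq)|$ equals the number of such $\beta$'s, namely $q^{2n-1}(q-1)$, and therefore $|\TGL_n(\Fq)| = q^{2n-2}(q-1) = q^{2n-1} - q^{2n-2}$.
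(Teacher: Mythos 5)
Your proof is correct and follows essentially the same route as the paper's: represent $1,\alpha,\dots,\alpha^{n-1},\beta,\alpha\beta,\dots,\alpha^{n-1}\beta$ in the basis $\{1,\alpha,\dots,\alpha^{2n-1}\}$, use the reduction $\alpha^{2n}=a\alpha+b$ to show the lower-right block is exactly the Toeplitz matrix $T_{\mathbf c}$ with $\mathbf c=(h_1,\dots,h_{2n-1})$, and then divide the count $q^{2n-1}(q-1)$ from Lemma \ref{nobases} by the $q$ choices of $h_0$. The only (welcome) difference is that you spell out the degree bookkeeping ($\ell=j+k-2n\le n-2$) showing why the reduced terms cannot pollute the bottom block, a point the paper passes over more quickly.
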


\begin{proof}
Let $f= X^{2n}-aX-b$ be an irreducible polynomial in $\Fq[X]$ and let $\alpha$ be a root of $f$ in $\Fqtn$. Given any $\beta\in \Fqtn$, there are unique $c_0, c_1, \dots , c_{2n-1}\in \Fq$ such that $\beta = c_0 + c_1\alpha + \dots + c_{2n-1}\alpha^{2n-1}$. Now $\alpha^{2n} = a\alpha +b$ and   therefore 
$\alpha^{2n-1+s} = a\alpha^{s} +b\alpha^{s-1}$ for $1\le s \le n-1$. This implies that in the unique expression for $\beta \alpha^{j-1}$ as an  $\Fq$-linear combination of $1, \alpha, \dots , \alpha^{2n-1}$, the coefficient of $\alpha^{n+i-1}$ is $c_{n+i-j}$ for $1\le i,j\le n$. In other words, the matrix whose columns represent the coordinates of $1, \, \alpha , \, \dots \,  \alpha^{n-1},  \, \beta, \alpha \beta, \, \dots , \, \alpha^{n-1} \beta$ with respect to the ordered basis $\left\{1, \alpha, \dots , \alpha^{2n-1}\right\}$ is a $2n \times 2n$ block matrix of the form 
$$
\begin {pmatrix} 
I_n & B \\
\mathbf{0} & T_{\mathbf{c}}
\end{pmatrix}, 
$$
where $B\in \M_n(\Fq)$ 
and $T_{\mathbf{c}}$ is the Toeplitz matrix 
as in \eqref{top} above. It follows that $\Sb =  \left\{1, \, \beta, \, \alpha , \, \alpha \beta, \, \dots , \,  \alpha^{n-1} ,\, \alpha^{n-1} \beta\right\}$ is an ordered $\Fq$-basis of $\Fqtn$ if and only if the Toeplitz matrix $T_{\mathbf{c}}$ is nonsingular. Moreover, if 
$\mathbf{c} =  \left(c_1, c_2, \dots, c_{2n-1}\right)\in \Fq^{2n-1}$
is such that $T_{\mathbf{c}}$ is nonsingular, then there are exactly $q$ values of $\beta = c_0 + c_1\alpha + \dots + c_{2n-1}\alpha^{2n-1}$ (corresponding to different choices for $c_0$) such that $\Sb$ is an ordered $\Fq$-basis of $\Fqtn$. But we have seen in the proof of Lemma \ref{nobases} that the number of $\beta \in \Fqtn$ for which $\Sb$ is an $\Fq$-basis of $\Fqtn$ is $q^{2n-1}(q-1)$. Consequently, 
$\left|\TGL_n(\Fq)\right| = q^{2n-1}(q-1)/q$, as desired. 
\end{proof}

The question as to whether for every prime power $q$ and positive integer $d$, there is an irreducible trinomial in $\Fq[X]$ of degree $d$ appears to be rather delicate. For example, Swan \cite{Sw} showed that if $d$ is a multiple of $8$, then there are no irreducible trinomials over $\FF_2$ of degree $d$. We refer to the papers of von zur Gathen \cite{vzG} and Hanson, Panario and Thomson \cite{HPT} for the current state of art on this topic. 
At any rate, a trinomial (that can possibly be a binomial) meeting the hypothesis of Proposition \ref{TGLsize} does exist in many cases. 
To illustrate some of these, 
we will simply use the following classical result. 

\begin{proposition}[{\cite[Thm. 3.75]{LN}}]
\label{binomials}
Let $d$  be a positive integer $\ge 2$ and $b\in \Fq$ be such that $b\ne 0$. Also let $e$ be the order of $b$ in $\Fq^*$. Then 
$X^d-b$ is irreducible in $\Fq[X]$ if and only if each prime factor of $d$ divides $e$ but not $(q-1)/e$, and moreover $q\equiv 1 (\text{mod } 4)$ whenever $d\equiv 0 (\text{mod } 4)$.
\end{proposition}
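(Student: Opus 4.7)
This proposition is a classical result, cited from Lidl--Niederreiter without proof; it is often called the \emph{Capelli--Serret irreducibility criterion} for binomials. My plan to reconstruct a proof is to pass to a root of $X^d - b$ and translate irreducibility into a numerical condition on multiplicative orders that can be checked prime-by-prime.

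First, I would fix a root $\alpha$ of $X^d - b$ in a fixed algebraic closure of $\Fq$. Since $b \ne 0$ and $e \mid q-1$, the hypotheses force $\gcd(d, q) = 1$ (a prime factor of $d$ equal to the characteristic could not divide $e$), so the roots of $X^d - b$ are precisely $\{\zeta^i \alpha : 0 \le i < d\}$ for $\zeta$ a primitive $d$-th root of unity. Hence $X^d - b$ is irreducible over $\Fq$ if and only if $[\Fq(\alpha):\Fq] = d$. Using the standard identity $[\Fq(\alpha):\Fq] = \mathrm{ord}_t(q)$, where $t$ is the multiplicative order of $\alpha$, together with the relation $t = e \cdot \gcd(t, d)$ forced by $\alpha^d = b$ having order $e$, the irreducibility question becomes: for which root $\alpha$ does one have $t = de$ with $\mathrm{ord}_{de}(q) = d$?

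Next, I would run a prime-by-prime Kummer analysis. Writing $d = \prod p^{a_p}$, $e = \prod p^{b_p}$, and $q - 1 = \prod p^{c_p}$, asking whether $v_p(\mathrm{ord}_{de}(q)) = a_p$ for each prime $p \mid d$ reduces, by a routine case distinction on the relative sizes of $a_p, b_p, c_p$, to exactly the two stated conditions $p \mid e$ and $p \nmid (q-1)/e$. Together these say that $b$ is \emph{not} a $p$-th power in $\Fq^*$. The necessity direction then becomes transparent: if $b = c^p$ with $p \mid d$ and $d = p d'$, then $X^d - b = (X^{d'})^p - c^p$ splits as $(X^{d'} - c)\bigl(X^{d'(p-1)} + c X^{d'(p-2)} + \cdots + c^{p-1}\bigr)$, giving reducibility at once.

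The main obstacle, as always with this sort of criterion, is the anomalous prime $p = 2$ when $4 \mid d$, because $(\mathbb{Z}/2^k\mathbb{Z})^*$ fails to be cyclic for $k \ge 3$. Here the naive Kummer tower argument can break down even when $b$ is a non-square, the archetype being the Aurifeuillian identity $X^4 + 4a^4 = (X^2 + 2aX + 2a^2)(X^2 - 2aX + 2a^2)$, which witnesses reducibility of $X^4 - b$ when $b = -4a^4$. I would treat this case separately and check that the extra congruence $q \equiv 1 \pmod 4$ is exactly what rules such splittings out; concretely, it forces $-1$ to be a square in $\Fq^*$, which is what keeps the recursive step lifting irreducibility of $X^{d'} - \beta$ up to $X^{2d'} - b$ valid at every level of the $2$-adic tower, and finally yields the irreducibility claim.
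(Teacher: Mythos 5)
The paper offers no proof of this proposition at all: it is quoted directly from Lidl and Niederreiter as \cite[Thm. 3.75]{LN} and used as a black box in Corollary \ref{Fermatprime}, so there is no in-paper argument to compare yours against. Judged on its own terms, your outline follows the standard route to this classical criterion, and the individual ingredients are correct: the roots are $\zeta^i\alpha$; irreducibility is equivalent to $[\Fq(\alpha):\Fq]=d$, which equals $\ord_t(q)$ for $t$ the multiplicative order of $\alpha$; the relation $t=e\gcd(t,d)$ holds; for a prime $p\ne\mathrm{char}\,\Fq$ the pair of conditions ``$p\mid e$ and $p\nmid(q-1)/e$'' is equivalent to $b$ not being a $p$-th power in $\Fq^*$; and the identity $X^4+4a^4=(X^2+2aX+2a^2)(X^2-2aX+2a^2)$ is exactly what forces the extra clause when $4\mid d$.

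As written, however, this is an outline rather than a proof, and two steps are genuinely missing. First, the passage from ``irreducible'' to ``$t=de$'' needs an argument: from $t=e\gcd(t,d)$ one should observe that $\alpha^{\gcd(t,d)}$ has order $e$, which divides $q-1$, and hence lies in $\Fq^*$ (a cyclic group has a unique subgroup of each order, so the elements of order dividing $q-1$ are exactly $\Fq^*$); thus $\alpha$ satisfies the binomial $X^{\gcd(t,d)}-\alpha^{\gcd(t,d)}$ over $\Fq$, and irreducibility forces $\gcd(t,d)=d$. Second, and more seriously, the ``routine case distinction on the relative sizes of $a_p, b_p, c_p$'' is where the entire sufficiency direction lives: one must actually show $\ord_{de}(q)=d$ when each prime $p\mid d$ satisfies $v_p(e)=v_p(q-1)\ge 1$ (plus the condition at $2$), which amounts to a lifting-the-exponent computation of $v_p(q^k-1)$; this is genuinely not routine at $p=2$ when $q\equiv 3\pmod 4$, which is precisely why the theorem has an extra hypothesis there. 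Your proposed repair for $p=2$ (``$q\equiv 1 \pmod 4$ forces $-1$ to be a square, which keeps the recursive lifting valid at every level'') asserts the conclusion rather than proving it. Finally, for the necessity direction you should note separately that if $p=\mathrm{char}\,\Fq$ divides $d$ then $b=c^p$ for some $c$ and $X^d-b=(X^{d/p}-c)^p$ is reducible, so restricting to $\gcd(d,q)=1$ is legitimate in both directions, not only under the stated hypotheses. The skeleton is the right one, but the core valuation computation must be supplied before this counts as a proof; alternatively, one can simply retain the paper's stance and cite \cite[Thm. 3.75]{LN}.
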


\begin{corollary}
\label{Fermatprime}
Assume that $q$ is a power of an odd prime that is not a Fermat prime. Then there are infinitely many positive integers $n$ such that $X^{2n} -b$ is irreducible in $\Fq[X]$ for some $b\in \Fq$.
\end{corollary}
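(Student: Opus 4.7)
The plan is to apply Proposition~\ref{binomials} with $d=2n$ and with $b$ chosen to be a generator of the cyclic group $\Fq^*$, so that the order of $b$ is $e=q-1$, and hence $(q-1)/e=1$. With this choice, the condition ``no prime factor of $d$ divides $(q-1)/e$'' becomes vacuous, and the irreducibility of $X^{2n}-b$ in $\Fq[X]$ reduces to two requirements: every prime factor of $2n$ must divide $q-1$, and $q\equiv 1\pmod{4}$ must hold whenever $4\mid 2n$.

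I would then split the argument into two cases, according to whether $q-1$ admits an odd prime factor. In the generic case, let $\ell$ be an odd prime dividing $q-1$, and for each integer $k\ge 1$ take $n=\ell^k$. The prime factors of $2n=2\ell^k$ are just $\{2,\ell\}$, both of which divide $q-1$ (the former because $q$ is odd); moreover, since $\ell$ is odd, $2n\equiv 2\pmod{4}$, making the $\pmod{4}$ condition vacuous. Hence $X^{2\ell^k}-b$ is irreducible in $\Fq[X]$ for every $k\ge 1$, producing the desired infinitely many $n$.

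In the exceptional case, $q-1$ is a pure power of $2$, and here I would show that the standing hypotheses force $q=9$. Writing $q=p^s$ with $p$ an odd prime: if $s=1$, then $p=2^t+1$ is prime, which forces $t$ to be a power of $2$ and hence $p$ to be a Fermat prime, contrary to hypothesis; if $s\ge 2$, then $q\ge 9$ ensures $t\ge 3$, and by Mihailescu's theorem (the former Catalan conjecture) the only solution of $p^s-2^t=1$ with $s,t\ge 2$ is $(p,s,t)=(3,2,3)$, giving $q=9$. For $q=9$, I would take $n=2^k$ with $k\ge 0$; then $2n=2^{k+1}$ has $2$ as its only prime factor (which divides $q-1=8$), and $9\equiv 1\pmod{4}$ validates the $\pmod{4}$ condition whenever it applies, so $X^{2^{k+1}}-b$ is irreducible for every $k\ge 0$.

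The main obstacle is isolating the exceptional case $q=9$. The subcase $s=2$ of $p^s-1=2^t$ is elementary: since $(p-1)(p+1)=2^t$ with $p$ odd, the consecutive even integers $p-1$ and $p+1$ must both be powers of $2$, forcing $p=3$. The subcase $s\ge 3$ either invokes Mihailescu or can be handled elementarily by combining the factorization $p^s-1=(p-1)(1+p+\cdots+p^{s-1})$, a parity analysis of the second factor, and the observation that $p^m+1$ is a power of $2$ only for very small $m$. Apart from this, all remaining steps reduce to routine divisibility checks combined with Proposition~\ref{binomials}.
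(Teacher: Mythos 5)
Your construction in the main case is exactly the paper's: take $b$ a primitive element of $\Fq^*$ (so $e=q-1$ and $(q-1)/e=1$), let $\ell$ be an odd prime dividing $q-1$, and set $n=\ell^k$, so that $2n\equiv 2\pmod 4$ and Proposition \ref{binomials} applies immediately. Where you differ is that the paper simply asserts, without justification, that the hypothesis forces $q-1$ to have an odd prime factor ($q-1=2^rs$ with $s>1$ odd), whereas you actually prove this, and in doing so you surface the one genuine subtlety: if $q-1$ is a pure power of $2$, then either $q$ is a Fermat prime or $q=9$. Under the reading of the hypothesis in which $q$ itself (rather than the underlying prime $p$) is required not to be a Fermat prime, $q=9$ is admissible and the paper's argument as written does not cover it; your separate treatment of $q=9$ via $n=2^k$ (using $9\equiv 1\pmod 4$) closes that case. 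Your elementary factorization argument for $p^s-1=2^t$ (reducing to consecutive even powers of two via $p^j\pm 1$, with the parity count of $1+p+\cdots+p^{s-1}$ handling odd $s$) is preferable to invoking Mihailescu and is all that is needed. In short: same method, but your version is more complete and repairs an unstated step in the paper's proof.
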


\begin{proof}
The assumption on $q$ implies that $q-1 = 2^r s$ for some integers $r,s$ such that $r\ge 1$, $s>1$, and $s$ is odd. Now let $\ell$ be a prime factor of $s$ and $n= \ell^i$ be any power of $\ell$, where $i\ge 1$. Also let $b$ be a primitive element of $\Fq^*$. Then $X^{2n}-b$ satisfies the hypothesis of Proposition \ref{binomials}. 
\end{proof}

We remark that some of the ideas in this section have eventually led to nice new proofs of \eqref{TGLn} in the general case; for details, we refer to \cite{GGR}. 
 
\section*{Acknowledgments}
  
We are grateful to Sartaj Ul Hasan for his careful reading of a preliminary version of this paper and some helpful suggestions.


\begin{thebibliography}{abcde}

%
%
%
%
\bibitem{BB}
A. T. Benjamin and C. D. Bennett, {\em The probability of relatively prime polynomials}, Math. Mag. {\bf 80} (2007), 196–-202.

\bibitem{CSWZ}
S. Corteel, C. Savage, H. Wilf, and D. Zeilberger, {\em A Pentagonal Number Sieve}, 
J. Combin. Theory Ser. A {\bf 82} (1998), 186-–192.

\bibitem{D} 
D. E. Daykin,  {\em Distribution of bordered persymmetric matrices in a finite field}, 
J. Reine Angew. Math. {\bf 203}  (1960), 47--54.

{\bibitem{FH} N. J. Fine and I. N. Herstein, {\em The probability that a matrix be nilpotent}, Illinois J.
Math. {\bf 2} (1958), 499--504.}  

\bibitem{GP}
Z. Gao and D. Panario, {\em Degree distribution of the greatest common
divisor of polynomials over $\Fq$},  Random Structures Algorithms  {\bf 29}  (2006), 
26--37.

{\bibitem{GGR} M. Garc\'{i}a-Armas, S. R. Ghorpade, and S. Ram, \emph{Relatively prime polynomials and nonsingular Hankel
matrices over finite fields},  J. Combin. Theory Ser. A {\bfseries 118} (2011),  819--828. }

{\bibitem{GSM} S. R. Ghorpade, S. U. Hasan, and M. Kumari, {\em Primitive polynomials, Singer cycles, and word-oriented 
linear feedback shift registers}, Des. Codes Cryptogr. {\bfseries 58} (2011),  123--134.}

%
%

\bibitem{HPT}
B. Hanson, D. Panario and D. Thomson, {\em Swan-like results for binomials and trinomials over $\Fq$, $q$ odd}, 
{\rm Des. Codes Cryptogr.} (2011), to appear, doi:10.1007/s10623-010-9476-7. 

\bibitem{HM} 
X. Hou and G.L. Mullen, {\em Number of irreducible polynomials and pairs of relatively prime polynomials in several variables 
over finite fields}, Finite Fields Appl. {\bf 15} (2009), 304--331.


{\bibitem {jac} N. Jacobson, {\em Basic Algebra I}, Second Ed., W. H. Freeman, New York, 1985.\label{jacob}}

{\bibitem{KL} E. Kaltofen and A. Lobo, {\em On rank properties of Toeplitz matrices over finite fields}, pp. 241--249,  
Proc. 1996 Internat. Symp. Symbolic Algebraic Comput. ({\em ISSAC '96}), ACM Press, New York, 1996.}

{\bibitem{K} D. E. Knuth, \emph{The Art of Computer Programming}, Vol. 2, \emph{Seminumerical Algorithms}, First Ed.,
Addison Wesley, Reading, MA, 1969.}

{\bibitem {LN} R. Lidl and H. Niederreiter, {\em Finite Fields}, Enc. of Math. and its Appl., Vol. 20, Cambridge University Press, Cambridge, 1983.\label{bib13}}


{\bibitem {N1} H. Niederreiter, {\em Factorization of polynomials and some linear-algebra problems over finite fields}, Linear Algebra Appl. {\bf 192} (1993),  301--328.\label{Niederreiter1}}

{\bibitem {N2} H. Niederreiter, {\em The multiple-recursive matrix method for pseudorandom number generation }, Finite Fields Appl. {\bf 1} (1995),  3--30.\label{Niederreiter2}}

%


\bibitem{Reif}
A. Reifegerate, {\em On an involution concerning pairs of polynomials in $\FF_2$}, J. Combin. Theory Ser. A {\bf 90} (2000), 216–-220.

%

{\bibitem{Sw}
R. G. Swan, {\em Factorization of polynomials over finite fields}, Pacific J. Math. {\bf 12} (1962), 1099--1106.}

{\bibitem{TV}
B. Tsaban and U. Vishne, {\em Efficient linear feedback shift registers with maximal period}, Finite Fields Appl. {\bf 8} (2002),  256--267.}
{\bibitem{vzG}
J. von zur Gathen, {\em Irreducible trinomials over finite fields}, Math. Comp. {\bf 72} (2003), 1987--2000.}

{\bibitem{Zeng} G. Zeng, W. Han and K. He,  {\em Word-Oriented Feedback Shift Register: $\sigma$-LFSR}, http://eprint.iacr.org/2007/114 (Cryptology ePrint Archive: Report 2007/114).\label{bib23} }                                                                                


\end{thebibliography}
\end{document}